\newcommand{\Pc}{F}
\newcommand{\id}{\mathrm{id}}
\newcommand{\isom}{\cong}
\newcommand{\mto}[1]{\stackrel{#1}{\longrightarrow}}
\newcommand{\isomto}{\mto{\isom}}
\newcommand{\sm}{\setminus}
\newcommand{\C}{\mathbb{C}}
\newcommand{\N}{\mathbb{N}}
\newcommand{\R}{\mathbb{R}}
\newcommand{\Z}{\mathbb{Z}}
\newcommand{\bd}{\partial}
\newtheorem{thm}{Theorem}
\newtheorem{lemma}[thm]{Lemma}
\newtheorem{theorem}[thm]{Theorem}
\newtheorem{prop}[thm]{Proposition}
\newtheorem{cor}{Corollary}[thm]
\newenvironment{dfn}{\null\par \textbf{Definition:}}{ \null \par}
\newcommand{\anote}[1]{} 
\newcommand{\alabel}[1]{\label{#1} } 
\newcommand{\dfnnd}[1]{\textit{#1}}
\newcommand{\Gl}{\operatorname{GL}}
\newcommand{\colim}{\operatornamewithlimits{colim}}
\newcommand{\cat}[1]{\text{\bf #1}}
\newcommand{\Pre}{\cat{Pre}}
\newcommand{\tensor}{\otimes}
\newcommand{\spec}{\operatorname{Spec}}
\newcommand{\Spec}{\spec}
\newcommand{\Sm}{\textit{Sm}}
\newcommand{\pt}{\text{\rm pt}}
\newcommand{\clsub}[1][{}]{\overset{#1}{\rightarrowtail}}
\newcommand{\Nis}{\text{\it Nis}}
\newcommand{\A}{\mathbb{A}}
\renewcommand{\P}{\mathbb{P}}
\newcommand{\dual}[1]{\check{#1}}
\newcommand{\G}{\mathbb{G}}
\newcommand{\lspan}[1]{\left<#1\right>}
\newcommand{\M}{\mathbb{M}}
\newcommand{\weq}{\simeq}
\newcommand{\weqto}{\overset{\sim}{\to}}
\newcommand{\Sq}{\operatorname{Sq}}
\providecommand{\deg}{\operatorname{deg}}
\providecommand{\weight}{\operatorname{wt}}
\newcommand{\sProj}{\mathcal{P}\!\operatorname{roj}}
\newcommand{\sSpec}{\mathcal{S}\!\operatorname{pec}}
\newcommand{\m}{\mathfrak{m}}
\newcommand{\GL}{\Gl}
\begin{document}

\title{The Motivic Cohomology of Stiefel Varieties}
\author{Ben Williams}

\maketitle

\begin{abstract} The main result of this paper is a computation of the motivic cohomology of varieties of
  $n \times m$-matrices of of rank $m$, including both the ring structure and the action of the
  reduced power operations. The argument proceeds by a comparison of the general linear group-scheme
  with a Tate suspension of a space which is $\A^1$-equivalent to projective $n-1$-space
  with a disjoint basepoint.
\paragraph{Key Words}
Motivic cohomology, higher Chow groups, reduced power operations, Stiefel varieties.

\paragraph{Mathematics Subject Classification 2000}
Primary: 19E15. 
\noindent

Secondary: 20G20, 57T10.
\end{abstract}

\section{Introduction}
The results of this paper are theorem \ref{th:mainch1}, which computes the motivic cohomology (or
higher Chow groups) of varieties of $n \times m$-matrices of of rank $m$, the Stiefel varieties of
the title, including the ring structure, and theorems \ref{c:SqOnW}, \ref{c:POnW} which compute the
action of the motivic reduced power operations, the first for the Steenrod squares, the second for
odd primes. The ring structure on $H^{*,*}(\Gl(n))$ has already been computed, in \cite{PUSHIN},
where the argument is via comparison with higher $K$-theory. We are able to offer a different
computation of the same, which is more geometric in character, relying on an analysis of comparison
maps of varieties rather than of cohomology theories. 

The equivalent computation for \'etale cohomology appears in \cite{MRaynaud}; the argument there is
by comparison with singular cohomology, and that suffices to determine even the action of the
reduced power operations on the \'etale cohomology of $\GL(n, \Z)$, and from there on the \'etale
cohomology of $\GL(n, k)$ where $k$ is an arbitrary field. The universal rings of loc.~cit.~are not
quite Stiefel varieties, but they are affine torsors over them. In principle, by slavish imitation
of loc.~cit.~the methods of the present paper allow one to prove that with sporadic exceptions the
universal stably-free module of rank at least $2$ over a field is not free, and to do so without recourse to a
non-algebraic category.

In this paper we deduce, by elementary means, the additive structure of the cohomology of Stiefel
varieties, proposition \ref{p:CohW1}, then we deduce the effect on cohomology of two comparison maps
between the different Stiefel manifolds, these are propositions \ref{p:Wincl} and
\ref{p:GLNsurj}. By use of these, the ring structure and the reduced power operations in all cases
may be deduced from the case of $\Gl(n)$. We present a map in homotopy $\G_m \wedge \P^{n-1}_+ \to \Gl(n)$ that
is well-known in the classical cases of $\R^* \wedge \R P^{n-1}_+ \to O(n)$ and $\C^* \wedge
\C P^{n-1}_+ \to U(n)$, \cite[Chapter 3]{JAMES}, where it fits into a larger pattern of maps from
suspensions of so-called ``stunted quasiprojective spaces'' into Stiefel manifolds. In the classical
case, with some care, one may show that the cohomology of a Stiefel manifold is generated as a ring
by classes detected by maps to stunted projective spaces. The comparison of Stiefel manifolds with
the appropriate stunted projective-spaces underlies much of the classical homotopy-theory of Stiefel
manifolds, the theory is presented thoroughly in \cite{JAMES}, we mention in addition only the
spectacular resolution of the problem of vector fields on spheres in \cite{ADAMSVBL}. Care is required even in
cohomology calculations because the products of classes in the cohomology of Stiefel manifolds muddy
the water. In $\A^1$-homotopy, there are also analogues of stunted projective spaces, but the
bigrading on motivic cohomology means that the products of generating classes may be disregarded in
articulating the range in which the cohomology $H^{*,*}(\G_m \wedge \P^{n-1}_+)$ and
$H^{*,*}(\Gl(n))$ coincide, and we arrive at a much simpler statement, theorem \ref{t:MainComp}
without having to mention the stunted spaces at all.

In proving theorem \ref{t:MainComp}, we employ a calculation in higher Chow groups. This
calculation is on the one hand attractive in its geometric and explicit character but on the other
hand it is the chief obstacle to extending the scope of the arguments presented here to other
theories than motivic cohomology.

\section{Preliminaries}

We compute motivic cohomology as a represented cohomology theory in the motivic- or $\A^1$-homotopy category of
Morel \& Voevodsky, see \cite{MV} for the construction of this category. The best reference for the
theory of motivic cohomology is \cite{MVW}, and the proof that the theory presented there is
representable in the category we claim can be found in \cite{DELIGNE}, subject to the restriction
that the field $k$ is perfect. Motivic cohomology, being a cohomology theory (again at least when
$k$ is perfect) equipped with suspension isomorphisms for both suspensions, $\Sigma_s$, $\Sigma_t$,
is represented by a motivic spectrum, of course, but we never deal explicitly with such objects.

We therefore fix a perfect field $k$. We shall let $R$ denote a fixed commutative ring of
coefficients.
 We denote the terminal object, $\spec k$, by $\pt$. If $X$ is a finite
type smooth $k$-scheme or more generally an element in the category $s\cat{Shv}_\Nis(\Sm/k)$), we write
$H^{*,*}(X;R)$ for the bigraded motivic cohomology ring of $X$. This is graded-commutative in
the first grading, and commutative in the second.  If $R \to R'$ is a ring map, then there is a map
of algebras $H^{*,*}(X;R) \to H^{*,*}(X;R')$. It will be of some importance to us that most of our
constructions are functorial in $R$, when the coefficient ring is not specified, therefore, it is to
be understood that arbitrary coefficients $R$ are meant and that the result is functorial in
$R$.

 We write $\M_R$ for the ring $H^{*,*}(\pt;R)$. Since
$\pt$ is a terminal object, the ring $H^{*,*}(X;R)$ is in fact an
$\M_R$ algebra. We assume the following vanishing results for a $d$-dimensional smooth scheme $X$:
$H^{p,q}(X;R) = 0$ when $p>2q$, $p > q+ d$ or $q< 0$. 

 At one point we employ the comparison theorem relating motivic cohomology and the higher Chow
groups. For all these, see \cite{MVW}.

Let $\Sm/k$ denote the category of smooth $k$-schemes, and let $\cat{Aff}/k$ denote the category of
affine regular finite-type $k$-schemes. We shall frequently make use of the following version of the Yoneda lemma
\begin{lemma} \alabel{l:Yoneda}
  The functor $\Sm/k \to \Pre (\cat{Aff}/k)$ given by $X \mapsto h_X$, where $h_X(\Spec R)$ denotes
  the set of maps $\Spec R \to X$ over $\Spec k$ is
  a full, faithful embedding.
\end{lemma}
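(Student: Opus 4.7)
The statement is a variant of the ordinary Yoneda lemma, in which we restrict the test category from all of $\Sm/k$ to the subcategory $\cat{Aff}/k$. The plan is to reduce to the ordinary Yoneda statement by exploiting the fact that every smooth $k$-scheme has a Zariski cover by affine opens, each of which belongs to $\cat{Aff}/k$, and that morphisms of schemes satisfy Zariski descent.

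Concretely, for smooth $k$-schemes $X$ and $Y$ I have to show the canonical map
\[
\Hom_{\Sm/k}(X,Y) \longrightarrow \Nat(h_X|_{\cat{Aff}/k},\, h_Y|_{\cat{Aff}/k})
\]
is a bijection. Faithfulness follows from the familiar fact that two morphisms of schemes which agree on the members of an open cover are equal: choose any affine open cover $\{U_i = \Spec A_i\}$ of $X$; the inclusions $U_i \hookrightarrow X$ are elements of $h_X(U_i)$, and if $f,g\colon X \to Y$ induce the same natural transformation then $f|_{U_i} = g|_{U_i}$ for every $i$, whence $f=g$.

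For fullness, given $\eta\colon h_X \to h_Y$, choose an affine open cover $\{U_i\}$ of $X$ and set $f_i := \eta_{U_i}(U_i \hookrightarrow X)\colon U_i \to Y$. The content of the lemma is that the $f_i$ glue. Since $U_i\cap U_j$ need not be affine, I further choose an affine open cover $\{V_{ij,\alpha}\}$ of $U_i \cap U_j$; applying naturality of $\eta$ along the two inclusions $V_{ij,\alpha}\hookrightarrow U_i$ and $V_{ij,\alpha}\hookrightarrow U_j$ to the element $(U_i \hookrightarrow X)\in h_X(U_i)$, respectively $(U_j \hookrightarrow X)\in h_X(U_j)$, shows that $f_i$ and $f_j$ agree on each $V_{ij,\alpha}$, hence on $U_i\cap U_j$. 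Zariski gluing then produces a morphism $f\colon X\to Y$, and a further application of naturality (to an arbitrary map $\Spec R \to X$ factored through the cover by refining if necessary) identifies the natural transformation induced by $f$ with $\eta$.

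The main obstacle, if one can call it that, is purely bookkeeping: the intersections $U_i\cap U_j$ are generally non-affine, so one cannot test naturality directly on them and must pass to an affine refinement. There is no genuine geometric content beyond the existence of affine open covers and Zariski descent for morphisms; in particular the smoothness hypothesis is used only to guarantee that affine opens are themselves smooth, so they indeed lie in $\cat{Aff}/k$ as required by the formulation of the statement.
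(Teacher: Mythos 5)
Your argument is correct, and it rests on the same underlying facts as the paper's proof -- that a smooth $k$-scheme is assembled from affine opens (each of which is regular, finite type, hence in $\cat{Aff}/k$) and that morphisms of schemes are determined by, and glue from, their restrictions to an open cover. The difference is one of execution: the paper writes $Y$ as a colimit of affines and deduces $\Hom_{\Sm/k}(Y,X)=\lim X(A_i)$, leaving the transfer of fullness and fidelity to ``abstract-nonsense arguments,'' whereas you verify faithfulness and fullness by hand. Your version actually makes explicit the content the paper compresses: the need to refine the (possibly non-affine) intersections $U_i\cap U_j$ by affines before invoking naturality, the Zariski gluing of the resulting local maps $f_i$, and the final check -- via an affine cover of $\Spec R$ refining the pullback of the cover of $X$ -- that the glued morphism $f$ induces the given natural transformation on every affine test object. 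That last step is stated a little telegraphically (``factored through the cover by refining if necessary''), since an arbitrary $\Spec R \to X$ need not factor through any single $U_i$, but the intended refinement argument is clear and correct. What the paper's phrasing buys is brevity and a clean reduction to the usual Yoneda embedding; what yours buys is a self-contained, elementary proof in which the descent data is visible, at the cost of some bookkeeping.
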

\begin{proof}
  The standard version of Yoneda's lemma is that there is a full, faithful embedding $\Sm/k \to
  \Pre(\Sm/k)$. The functor we are considering is the composition $\Sm/k \to \Pre(\Sm/k) \to
  \Pre(\cat{Aff}/k)$ obtained by restricting the domain of the functors in $\Pre (\Sm/k)$. One can
  write any $Y$ in $\Sm/k$ as a colimit of spectra of finite-type
  $k$-algebras. We have
  \begin{equation*}
    \Sm/k(Y ,X) = \Sm/ k (\colim \Spec A_i, X) = \lim \Sm /k (\Spec A_i ,X) = \lim X(A_i)
  \end{equation*}
  from which it follows that the functor $\Sm / k \to \cat{Aff}/k$ inherits fullness and fidelity from
  the Yoneda embedding by abstract-nonsense arguments.
\end{proof}
We shall generally write $X(R)$ for  $h_X(\Spec R)$.

In practice this result means that rather than specifying a map of schemes $X \to Y$ explicitly, we shall happily
exhibit a set-map $X(R) \to Y(R)$, where $R$ is an arbitrary finite-type $k$-algebra, and then
observe that this set map is natural in $R$. The result is a map in $\Pre(\cat{Aff}/k)$, which is
therefore (by the fullness and fidelity of Yoneda) also understood as a map of schemes $X \to Y$.

\section{The Additive Structure} 

\begin{prop} \alabel{p:BDIFF} Let $X$ be a smooth scheme and suppose $E$ is an $\A^n$-bundle over
  $X$, and $F$ is a sub-bundle with fiber $\A^\ell$, then there is an exact triangle of
   graded $H^{*,*}(X)$-modules
  \begin{equation*}
    \xymatrix{ H^{*,*}(X)\tau\ar^j[rr] & & H^{*,*}(X) \ar[dl] \\ & H^{*,*}(E\sm F) \ar^{\bd}[ul] }
  \end{equation*}
  where $|\tau|= (2n-2\ell, n -\ell)$
\end{prop}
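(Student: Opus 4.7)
The plan is to apply the Gysin (localization) long exact sequence in motivic cohomology to the closed immersion $F \hookrightarrow E$ with open complement $E \sm F$, and then to collapse two of the three terms using $\A^1$-invariance.

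First I would observe that $F \hookrightarrow E$ is a closed immersion of smooth schemes of pure codimension $c := n - \ell$ (Zariski-locally over $X$ it models the linear inclusion $\A^\ell \into \A^n$ multiplied by $X$). Smooth purity in motivic cohomology then supplies a long exact sequence
\[
\cdots \to H^{p-2c,\,q-c}(F) \mto{j} H^{p,q}(E) \to H^{p,q}(E \sm F) \mto{\bd} H^{p-2c+1,\,q-c}(F) \to \cdots
\]
The bidegree shift $(2c,c)$ is the Thom-isomorphism shift for the rank-$c$ normal bundle of $F$ in $E$. Naturality of the Gysin triangle with respect to pullback along the structure map to $X$, combined with the projection formula, identifies this as a long exact sequence of graded $H^{*,*}(X)$-modules.

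Next I would apply $\A^1$-invariance twice. Because $E \to X$ and $F \to X$ are Zariski-locally trivial with affine-space fibers, a Mayer--Vietoris patching argument starting from the elementary identity $H^{*,*}(Y) \weqto H^{*,*}(Y \times \A^1)$ yields the structural isomorphisms $H^{*,*}(X) \weqto H^{*,*}(E)$ and $H^{*,*}(X) \weqto H^{*,*}(F)$. Substituting both into the Gysin sequence turns it into
\[
\cdots \to H^{p-2c,\,q-c}(X) \mto{j} H^{p,q}(X) \to H^{p,q}(E \sm F) \mto{\bd} H^{p-2c+1,\,q-c}(X) \to \cdots
\]
Introducing a formal generator $\tau$ of bidegree $(2c,c) = (2n-2\ell, n-\ell)$ encodes the shifted copy of $H^{*,*}(X)$ as the free rank-one module $H^{*,*}(X)\tau$, and the three-term exact triangle of the statement drops out.

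The main obstacle is essentially bookkeeping rather than a substantive step: one must check that the Gysin sequence is genuinely a sequence of $H^{*,*}(X)$-modules (this is the projection formula applied to the structure map $E \to X$), and that the Thom-class shift coming from the normal bundle of $F$ in $E$ really has bidegree $(2c,c)$, matching $|\tau|$. Everything else is a citation of smooth purity and homotopy invariance from \cite{MVW}.
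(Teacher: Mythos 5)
Your proposal is correct and follows essentially the same route as the paper: the purity/Gysin cofiber sequence $E \sm F \to E \to \operatorname{Th}(N)$ for the codimension-$(n-\ell)$ closed immersion, homotopy invariance collapsing $H^{*,*}(E)$ and $H^{*,*}(F)$ to $H^{*,*}(X)$, and the Thom shift $(2(n-\ell),n-\ell)$ producing the free module $H^{*,*}(X)\tau$. The only addition in the paper is a normalization of $\tau$ (choosing between $\pm\tau$ via the cycle class of $F$ in higher Chow groups, and relating $j(\tau)$ to Euler classes), which is a convention used in later arguments rather than part of proving the stated triangle.
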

\begin{proof}
  This is the localization exact triangle for the closed sub-bundle $F \subset E$, where 
  \[ H^{*,*}(E) = H^{*,*}(F)  = H^{*,*}(X) \]
 arising from the cofiber sequence (see~\cite{MV} for this and
  all other unreferenced assertions concerning $\A^1$-homotopy)
  \begin{equation*}
    \xymatrix{    E \sm F \ar[r] & E \ar[r] & \operatorname{Th} (N)}
  \end{equation*}
  where $N$ is the normal-bundle of $F$ in $E$. There are in two possible choices for $\tau$, since
  $\tau$ and $-\tau$ serve equally well. We make the convention that in any localization exact
  sequence associated with a closed immersion of smooth schemes $Z \to X$, viz.
  \[ \xymatrix{ \ar[r] & H^{*,*}(Z)\tau \ar[r] & H^{*,*}(X) \ar[r] & H^{*,*}(X \sm Z) \ar[r] & } \]
  the class $\tau$ should be the class which, under the natural isomorphism of the above with the
  localization sequence in higher Chow groups, corresponds to  the class represented by $Z$ in $CH^*(Z,*)$.

  In the case where $F=X$ is the zero-bundle, then $j$ takes $\tau$ to $e(E)$, the Euler
  class, as proved in \cite{VREDPOWER}. In general, by identifying $H^{*,*}(X)\tau$ with
  $CH^{*-n+\ell}(F,*)$, the higher Chow groups of $F$ as a closed subscheme of $E$, and employing
  covariant functoriality of higher Chow groups for the closed immersions $X \clsub F \clsub E$, we
  see that $j(\tau)e(F) = e(E)$.
\end{proof}

As shall be the case throughout, $H^{*,*}(X)$ denotes cohomology with unspecified coefficients, $R$, and
the result is understood to be natural in $R$. For the naturality of the localization sequence in
$R$, one simply follows through the argument in \cite{MVW}, which reduces it to the computation of
$H^{*,*}(\P^d) = \M_R[\theta]/(\theta^{d+1})$ which is natural in $R$ by elementary means, c.f.~\cite{FULTON}.

We note that $|j(\tau)| = (2c,c)$, so if, as often happens,
$H^{2c, c}(X) = 0$, this triangle is a short exact sequence of
$H^{*,*}(X)$-modules:
\begin{equation*}
  \xymatrix{ 0 \ar[r] & H^{*,*}(X) \ar[r] & H^{*,*}(E\sm F) \ar[r] & H^{*,*}(X) \rho \ar[r] & 0 }
\end{equation*}
Here $|\rho| =( 2n -2\ell - 1, n- \ell)$

Since $H^{*,*}(X)\rho$ is a free graded $H^{*,*}(X)$-module, this short exact sequence of graded modules splits,
 there is an isomorphism of $H^{*,*}(X)$-modules \[H^{*,*}(E\sm F) \isom H^{*,*}(X) \oplus H^{*,*}(X)\rho\]

We remark that $|\rho| = (2c-1,c)$, so that $2\rho^2 = 0$ by anti-commutativity, we now see
that $(a+b\rho)(c+d\rho) = ac + (ad + (-1)^{\deg{c}}bc)\rho + (-1)^{\deg{d}}bd \rho^2$, so in many
cases (e.g. when $1/2 \in R$) the multiplicative structure is fully determined, and $H^{*,*}(E \sm F) =
{H^{*,*}(X)}[\rho]/(\rho^2)$

Observe that if $H^{2n,n}(X) = 0$ for $n>0$, as often happens, then the same applies to
$H^{*,*}(E\sm F)$.

We will have occasion later to refer to the following two results, which appear here for want of
anywhere better to state them

\begin{prop}
  Let $X$ be a scheme and suppose $E$ is a Zariski-trivializeable fiber bundle with fiber $F \weq
  \pt$. Then $E \weq X$.
\end{prop}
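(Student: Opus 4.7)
The plan is to glue together local trivializations via Nisnevich descent in the Morel--Voevodsky model structure. By hypothesis there exists a Zariski cover $\{U_\alpha \to X\}$ over which $E$ trivializes, so that $E|_{U_\alpha} \isom U_\alpha \times F$. Over each $U_\alpha$ the bundle projection restricts to the first projection $U_\alpha \times F \to U_\alpha$; since $F \weq \pt$ and the product with a fixed object preserves weak equivalences in $s\cat{Shv}_\Nis(\Sm/k)$, this projection is a weak equivalence.

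To combine these local equivalences, I would form the \v{C}ech simplicial object $\check{C}_\bullet$ associated with the cover $\{U_\alpha\}$. Zariski covers are Nisnevich covers, so Nisnevich descent (as developed in \cite{MV}) yields $\hocolim \check{C}_\bullet \weq X$. Pulling the bundle $E$ back along the augmentation produces a simplicial object $\check{C}_\bullet \times_X E$ whose augmentation $\hocolim(\check{C}_\bullet \times_X E) \weq E$ likewise follows from descent applied to the induced Zariski cover $\{E|_{U_\alpha}\}$ of $E$. At simplicial level $n$ the map $\check{C}_\bullet \times_X E \to \check{C}_\bullet$ is the projection
\[
(U_{\alpha_0}\cap \cdots \cap U_{\alpha_n}) \times F \to U_{\alpha_0}\cap \cdots \cap U_{\alpha_n},
\]
a weak equivalence by the previous paragraph. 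Level-wise weak equivalences of simplicial objects induce weak equivalences of homotopy colimits, so this step closes.

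Assembling the pieces gives a chain $E \weq \hocolim(\check{C}_\bullet \times_X E) \weq \hocolim \check{C}_\bullet \weq X$, which is the desired statement. The main obstacle is the descent bookkeeping — one must confirm that the \v{C}ech nerve of the chosen Zariski cover genuinely presents $X$ as a homotopy colimit in the $\A^1$-model structure, and that the level-wise projections really do assemble into a map of simplicial diagrams over that nerve — but these are standard features of the Morel--Voevodsky construction, so the argument is essentially formal once they are brought to bear.
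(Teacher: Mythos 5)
Your proposal is correct and coincides with what the paper intends: the paper's own ``proof'' is just the citation ``This is standard, see \cite{DHI}'', and the \v{C}ech-nerve/Nisnevich-descent argument you spell out (local triviality, levelwise projections $V \times F \to V$ being $\A^1$-equivalences, and descent identifying the homotopy colimits of the two nerves with $E$ and $X$) is precisely the standard argument that reference supplies. No gap to report.
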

\begin{proof}
  This is standard, see \cite{DHI}.
\end{proof}

When we use the term `bundle', we shall mean a Zariski-trivializable bundle over a scheme.
The following two propositions allow us to identify affine bundles which are not necessarily vector-bundles.

\begin{prop} \alabel{p:projBundleTheorem}
  Suppose $X$ is a scheme, $P$ is a projective bundle of rank $n$ over $X$ and $Q$ is a projective subbundle of
  rank $n-1$, Then $P \sm Q$ is a fiber bundle with fiber $\A^{n-1}$.
\end{prop}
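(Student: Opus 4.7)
The plan is to reduce to the case where the pair $(P,Q)$ is trivial over the base, and then to exhibit the complement $P \sm Q$ explicitly as an affine-space bundle in that trivial situation. Being a Zariski-locally trivial bundle is a Zariski-local property on $X$, so it suffices to produce, for any point $x \in X$, a Zariski-open neighborhood $U \subset X$ over which $P|_U \sm Q|_U$ is isomorphic, as a scheme over $U$, to $\A^{n-1} \times U$.

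Write $P = \P(E)$ and $Q = \P(F)$ for a rank $n$ vector bundle $E$ on $X$ and a rank $n-1$ subbundle $F \subset E$. The key step is to simultaneously trivialize the pair $F \subset E$ on a Zariski-neighborhood of $x$. First shrink $X$ so that $F$ is free with frame $e_1,\dots,e_{n-1}$. Choose any vector $e_n \in E_x$ not lying in $F_x$; lift it to a local section of $E$. On some smaller neighborhood $U$ the sections $e_1,\dots,e_n$ form a frame of $E$ (the locus where they do is the non-vanishing locus of the determinant of the frame matrix, which is open and contains $x$). Over this $U$, the inclusion $F \subset E$ is identified with $\mathcal{O}_U^{n-1} \subset \mathcal{O}_U^n$ as the first $n-1$ coordinates.

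Projectivizing, $P|_U \isom \P^{n-1} \times U$ with homogeneous coordinates $[x_1 : \cdots : x_n]$, and $Q|_U$ is the closed subscheme cut out by $x_n = 0$, i.e.\ $\P^{n-2} \times U$. Its complement is the standard affine chart
\[ P|_U \sm Q|_U \;\isom\; (\P^{n-1} \sm \P^{n-2}) \times U \;\isom\; \A^{n-1} \times U, \]
which is manifestly trivial over $U$ with fiber $\A^{n-1}$. Covering $X$ by such open sets $U$ exhibits $P \sm Q \to X$ as a Zariski-locally trivial bundle with fiber $\A^{n-1}$.

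The only non-formal step is the simultaneous trivialization of the pair $F \subset E$, and this is the point one should expect to handle with care; once that is in place, the computation of the local model is immediate by choosing the standard affine chart of $\P^{n-1}$ complementary to the coordinate hyperplane $\P^{n-2}$. Note also that the proof gives no canonical identification of the fiber with $\A^{n-1}$ — different choices of the extending section $e_n$ differ by an element of $F$, which alters the chart by a translation, so the transition functions take values in the affine group, not merely $\Gl_{n-1}$; this is consistent with the claim that we obtain only an $\A^{n-1}$-bundle, not a vector bundle.
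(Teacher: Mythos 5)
Your proof is correct and follows essentially the same route as the paper, which simply invokes a Zariski open cover trivializing both bundles; your write-up usefully makes explicit the simultaneous trivialization of the pair (extending a local frame of $F$ to one of $E$) and the identification of the local complement with the standard chart $\P^{n-1} \sm \P^{n-2} \isom \A^{n-1}$. The only cosmetic remark is that writing $P = \P(E)$, $Q = \P(F)$ globally is more than is needed (or strictly given by the hypotheses); since the argument is local on $X$, it suffices to make that identification over each trivializing open set.
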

\begin{proof}
  This follows immediately by considering a Zariski open cover trivializing both bundles.
\end{proof}

Let $k$ be a field. Let $W(n,m)$ denote the variety of full-rank $n\times m$ matrices over
$k$, that is to say it is the open subscheme of $\A^{nm}$ determined by the nonvanishing of at least
one $m \times m$-minor. Without loss of generality, $m\le n$. By a \textit{Stiefel Variety\/} we mean such a variety
$W(n,m)$.

\begin{prop} \alabel{p:CohW1}
  The cohomology of $W(n,m)$ has the following presentation as an $\M_R$-algebra:
  \begin{equation*}
    H^{*,*}(W(n,m);R) = \frac{\M_R[\rho_n,\dots, \rho_{n-m+1}]}{I} \qquad |\rho_i| = (2i-1,i)
  \end{equation*}
  The ideal $I$ is generated by relations $\rho_i^2-a_{n,i}\rho_{2i-1}$, where the elements
  $a_{n,i}$ lie in $\M_R^{1,1}$ and satisfy $2a_{n,i}=0$.
\end{prop}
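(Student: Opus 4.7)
The plan is induction on $m$. The base case $m = 1$ identifies $W(n,1)$ with $\A^n \setminus \{0\}$, and Proposition~\ref{p:BDIFF} applied with $X = \pt$, $E = \A^n$, $F = \{0\}$ gives $H^{*,*}(W(n,1);R) \isom \M_R \oplus \M_R\rho_n$ as $\M_R$-modules, with $|\rho_n| = (2n-1,n)$. The square $\rho_n^2$ sits in $\M_R^{4n-2,2n}$, which vanishes for $n>1$ by the standing bound $\M_R^{p,q} = 0$ for $p>q$; this agrees with the claim under the convention $\rho_j := 0$ for $j$ outside $\{n-m+1,\dots,n\}$. The case $n=1$ falls under the $c=1$ normalization discussed in the final paragraph.

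For the inductive step, I would consider $\pi : W(n,m) \to W(n,m-1)$ forgetting the last column. Via Lemma~\ref{l:Yoneda}, $W(n,m) = E \setminus F$ where $E = W(n,m-1)\times\A^n$ and $F = \{(M,v) : v \in \im M\}$ is the rank-$(m-1)$ column-span sub-bundle. Proposition~\ref{p:BDIFF} supplies a localization triangle; to upgrade this to a short exact sequence I would check $H^{2c,c}(W(n,m-1);R) = 0$ for $c := n-m+1$. By the inductive presentation, a basis monomial $a\rho_{i_1}\cdots\rho_{i_k}$ has second bidegree $q' + \sum_j i_j$, and the bound $i_j \geq n-m+2 > c$ forces $k=0$, reducing to $\M_R^{2c,c} = 0$. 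The resulting split short exact sequence gives $H^{*,*}(W(n,m)) = H^{*,*}(W(n,m-1)) \oplus H^{*,*}(W(n,m-1))\rho_{n-m+1}$ as $\M_R$-modules, and the old generators and their relations are transported through the ring map $\pi^*$.

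For the ring structure only $\rho^2$ remains, where $\rho := \rho_{n-m+1}$ has bidegree $(2c-1,c)$. Writing $\rho^2 = A + B\rho$ with $A \in H^{4c-2,2c}(W(n,m-1))$ and $B \in H^{2c-1,c}(W(n,m-1))$, I would census the basis monomials in each bidegree. The inequality $i_j \geq n-m+2 > c$ kills multi-factor contributions to $A$, forces $B = 0$ when $c \geq 2$, and restricts $A$ to a single term $\alpha\rho_{2c-1}$ with $\alpha \in \M_R^{1,1}$ (read as zero when $2c-1 \notin \{n-m+2,\dots,n\}$). Graded anticommutativity of $\rho$ gives $2\rho^2 = 0$, so $2\alpha = 0$; setting $a_{n,n-m+1} := \alpha$ completes the step.

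The main obstacle is the edge case $c = 1$ (the final inductive step $m = n$, producing $\Gl(n)$, and likewise the base case with $n=1$). Here the inequality $i_j > c$ fails, and the bidegree census admits both a stray $\beta \in \M_R^{2,2}$ in $A$ and a term $B\rho_1$ with $B \in \M_R^{1,1}$ in $B\rho$; note $\rho_1$ itself now plays the role of $\rho_{2c-1}$, so $B$ is the intended $a_{n,1}$. To kill $\beta$ I would exploit the freedom $\rho_1 \mapsto \rho_1 + \mu$ with $\mu \in H^{1,1}(W(n,n-1)) = \M_R^{1,1}$, normalizing $\rho_1$ so that its restriction at the identity-matrix basepoint $\pt \to \Gl(n)$ is zero. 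Restricting $\rho_1^2 = \beta + B\rho_1$ back to $\pt$ then forces $\beta = 0$, leaving $\rho_1^2 = B\rho_1$ with $2B = 0$ by anticommutativity, as claimed.
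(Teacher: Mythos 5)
Your proposal is correct and takes essentially the same route as the paper: induction on $m$ using the same bundle pair and proposition \ref{p:BDIFF}, splitting the localization triangle via the vanishing of $H^{2c,c}$, and a bidegree census to pin down $\rho_{n-m+1}^2$. Your explicit basepoint normalization of $\rho_1$ in the $c=1$ step (killing the potential stray constant in $\M^{2,2}_R$, which genuinely can appear for a bad choice of lift since $(\rho_1+\mu)^2=\rho_1^2+\mu^2$) is a careful treatment of an edge case the paper's proof passes over without comment.
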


We shall later identify $a_{n,i}$ as $\{-1\}$, the image of the class of $-1$ in $k^* = H^{1,1}(\Spec
k;\Z)$ under the map $H^{*,*}(\Spec k; \Z) \to H^{*,*}(\Spec k; R)$.

\begin{proof}
  If $n=1$, there is only one possibility to consider, that of $W(1,1) = \A^1 \sm \{0\}$, the
  cohomology of which is already known from \cite{VREDPOWER}, and is as asserted in the proposition. We therefore assume $n \ge 2$.

  The proof proceeds by induction on $m$, starting with $m=1$ (we could start with $W(n,0)
  = \pt$). In this case $W(n,1) = \A^n  \sm \{0\}$, and $H^{*,*}(W(n,1)) = \M[\rho_n]/(\rho_n^2)$.

  $W(n,m-1)$ is a dense open set of $\A^{nm}$, and as such is a smooth scheme. If $m < n$,
  there is a trivial $\A^n$-bundle over $W(n,m-1)$, the fiber over a matrix $A$ is the set
  of all $n \times m$-matrices whose first $m-1$ columns are the matrix $A$ 
  \begin{equation*}
    \begin{pmatrix} &  & & v_1 \\ & A & & \vdots \\ & & & v_n \end{pmatrix}
  \end{equation*}
  As a sub-bundle of this bundle, we find a trivial $\A^{m-1}$-bundle; the fiber of which over a
  $\overline{k}$-point (i.e.~a matrix) $A$ is the set of matrices where $(v_1, \dots, v_n)$ is in the
  row-space of $A$. Proposition \ref{p:BDIFF} applies in this setting, and we conclude that there
  exists an exact triangle
  \begin{equation*}
    \xymatrix{ H^{*,*}(W(n,m-1))\tau \ar[rr] && H^{*,*}(W(n,m-1)) \ar[dl] \\ & H^{*,*}(W(n,m))
      \ar^{\bd}[ul] &}
  \end{equation*}
  
  Since $H^{2i,i}(W(n,m-1))=0$ by induction, so this triangle splits to give 
  \[ H^{*,*}(W(m,n)) \isom  H^{*,*}(W(m-1,n)) \oplus H^{*,*}(W(m-1,n))\rho_{n-m+1} \]
 where $|\rho_{n-m+1}| = (2(n-m+1)-1,n-m+1)$. By graded-commutativity we have, $2\rho_{n-m+1}^2 = 0$.
 By considering the bigrading on the motivic cohomology, and the vanishing results, we know that
  \begin{equation*}
     \rho_{r}^2 \in H^{4r-2,2r}(W(n,m);R) = H^{4r-2, 2r}(W(n,m-1);R) \text{ where $r=n-m+1$}
  \end{equation*}
We can describe
  $H^{*,*}(W(n,m-1);R)$ as an $\M$-module as follows
  \begin{equation*}
    H^{*,*}(W(n,m-1);R) \isom \M \oplus \bigoplus_i \M\rho_i \oplus \bigoplus_{i<j} \M\rho_i \rho_j
    \oplus J
  \end{equation*}
  where $J$ is the submodule generated by multiples of at least three distinct classes of the form $\rho_i$.
  Since $\weight(\rho_i\rho_j) = i+j > 2(n-m+1)$ for all $i,j \geq n-m+2$, it follows the higher product
  terms are irrelevant to the determination of the cohomology group $H^{4n-4m+2,2n-2m+2}(W(n,m-1;R))$. 

 There are two possibilities to consider. First, that $n>2m-1$, which by consideration of the
 grading forces $H^{4n-4m+2, 2n-2m+2}(W(n,m-1);R) = 0$, and so $\rho_{n-m+1}^2 =0$. The other is $n
 \le 2m-1$, in which case
\[H^{4n-4m+2, 2n-2m+2}(W(n,m-1);R) = \M^{1,1}_R\rho_{2n-2m+1}\]
so that
 $\rho_{n-m+1}^2 = a_{n,m}\rho_{2n-2m+1}$ as required. The bigrading alluded to above forces
 $2a_{n,m} = 0$.
\end{proof}

We denote the cohomology ring 
\begin{equation*} H^{*,*}(W(n,m);R) = \M_R[\rho_n, \dots, \rho_{n-m+1}]/I
\end{equation*}
where the ideal $I$ is understood to depend on $n$, $m$. We shall need the following technical lemma

\begin{lemma} \alabel{l:techbff} Let $Z \to X$ be a closed immersion of irreducible smooth schemes,
  and let $f: X' \to X$ be a map of smooth schemes such that $f^{-1}(Z)$ is again smooth and
  irreducible and so that is either $f$ is flat or split by a flat map, in the sense that there
  exists a flat map $s: X \to X'$ such that $s \circ f = \id_{X'}$. Then there is a map of
  localization sequences in motivic cohomology
  \[ 
  \xymatrix{  \ar[r] &   H^{*,*}(Z)\tau\ar[d] \ar[r] & H^{*,*}(X) \ar[r] \ar[d] & \ar[d]
    \ar[r]  H^{*,*}(X \sm Z) & \\
    \ar[r] &  H^{*,*}(f^{-1}(Z))\tau' \ar[r] & H^{*,*}(X') \ar[r] & 
    \ar[r] H^{*,*}(X' \sm f^{-1}(Z)) &  }
    \]
such that the last two vertical arrows are the functorial maps on cohomology and such that $\tau
\mapsto \tau'$.
\end{lemma}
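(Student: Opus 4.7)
The plan is to realize the desired ladder as the effect on cohomology of a morphism of cofiber sequences in the pointed motivic homotopy category, and then to verify the $\tau \mapsto \tau'$ identification by appeal to the higher Chow-group interpretation. Recall from the proof of Proposition~\ref{p:BDIFF} that the localization triangle is obtained by applying $H^{*,*}$ to the cofiber sequence $X \sm Z \into X \to X/(X \sm Z)$, together with the Morel--Voevodsky purity identification $X/(X \sm Z) \weq \operatorname{Th}(N_{Z/X})$. Any morphism $f : X' \to X$ of smooth schemes with $f^{-1}(Z)$ smooth restricts to a morphism of pairs, and so yields by naturality a morphism of cofiber sequences whose two rightmost columns are automatically the functorial pullbacks $f^*$. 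All that remains is to identify the leftmost column, after purity, as the map covering $g := f|_{f^{-1}(Z)}$ and to check that this map carries $\tau$ to $\tau'$.

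For the flat case, I would appeal to flat base change for regular embeddings. Because $f$ is flat and $f^{-1}(Z)$ is smooth (hence reduced), the scheme-theoretic preimage $X' \times_X Z$ has the expected codimension and coincides with $f^{-1}(Z)$ as a closed subscheme of $X'$, and moreover $N_{f^{-1}(Z)/X'} \isom g^* N_{Z/X}$. The induced map $\operatorname{Th}(N_{f^{-1}(Z)/X'}) \to \operatorname{Th}(N_{Z/X})$ is therefore the canonical map covering $g$. For the $\tau$-identification, use the convention fixed in Proposition~\ref{p:BDIFF}: under the comparison with higher Chow groups, $\tau$ corresponds to $[Z] \in CH^0(Z)$ and $\tau'$ to $[f^{-1}(Z)] \in CH^0(f^{-1}(Z))$; the leftmost vertical is $g^*$, which being a ring map sends $1$ to $1$, so $\tau \mapsto \tau'$.

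The main obstacle is the case in which $f$ itself is not flat but is split by a flat retraction $s : X \to X'$ with $s \circ f = \id_{X'}$, for which flat base change is not directly available and $f$ is a closed immersion. The plan is to leverage the flat case already handled, applied now to $s$: this yields a map of localization triangles going from the pair $(X', f^{-1}(Z))$ into $(X, s^{-1}(f^{-1}(Z)))$ with Thom-class compatibility. Because $f^* \circ s^* = \id$ on $H^{*,*}(X')$, the map $f^*$ is a retraction of $s^*$, and the sought ladder for $f$ is obtained as the unique ladder whose composition with the $s^*$-ladder recovers the flat-case output. The remaining subtlety, which is the real content of this case, is to verify compatibility on the shifted term $H^{*,*}(Z)\tau$; this reduces, via the higher Chow-group description, to the statement that $g^*$ acts as a unit-preserving ring map on $CH^0$ of the irreducible smooth schemes involved, which follows again from flat pullback applied to $s$.
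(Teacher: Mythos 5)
Your treatment of the flat case is essentially the paper's own argument: build the ladder from the map of cofiber sequences (purity supplying the Thom-space arrow), pass to higher Chow groups, and observe that the left-hand column is flat pullback of cycles, which carries $[Z]$ to $[f^{-1}(Z)]$, i.e.\ $\tau \mapsto \tau'$. The normal-bundle base-change remark is a harmless addition. So far, no objection.

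The split-by-flat case, however, contains a genuine gap. Applying the flat case to $s$ with the closed subscheme $f^{-1}(Z) \subset X'$ produces a ladder relating the localization sequences of the pairs $(X', f^{-1}(Z))$ and $(X, s^{-1}(f^{-1}(Z)))$ --- and $s^{-1}(f^{-1}(Z))$ is \emph{not} $Z$ (in the intended application, with $X = U_{n,m}$, $X' = E_{n-1,m-1}$ and $Z = Z_{n,m} \cap U_{n,m}$, the subvariety $s^{-1}(Z_{n-1,m-1})$ is genuinely different from $Z$, and neither contains the other). Consequently your composite does not involve the sequence of $(X,Z)$ at all, and the proposed characterization of the sought ladder as ``the unique ladder whose composition with the $s^*$-ladder recovers the flat-case output'' is not available: there is no flat-case statement for $f \circ s$ (which is not flat), no uniqueness argument is given, and the identity $f^* \circ s^* = \id$ only controls the middle and right columns, whereas the entire content of the lemma is the left column $H^{*,*}(Z)\tau \to H^{*,*}(f^{-1}(Z))\tau'$. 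Even the most charitable reading of the retraction trick (regarding $f$ as a map of pairs $(X', f^{-1}(Z)) \to (X, s^{-1}(f^{-1}(Z)))$ and using $s\circ f = \id$) yields Thom-class compatibility for the pair $(X, s^{-1}(f^{-1}(Z)))$, not for $(X,Z)$, which is what is used in proposition \ref{p:GLNsurj}. Finally, the closing claim that everything ``follows again from flat pullback applied to $s$'' begs the question: in this case $g: f^{-1}(Z) \to Z$ is a closed immersion, there is no flat pullback along it, and identifying the abstract left-column map with a cycle-level map sending $[Z]$ to $[f^{-1}(Z)]$ --- a clean/excess-intersection statement in which equality of codimensions is essential --- is precisely the point that needs proof, not a formality. (For what it is worth, the paper's written proof also only carries out the flat case, so the split case is exactly where an honest additional argument is required; yours as stated does not supply one.)
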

The giving of references for results concerning higher Chow groups is deferred to the beginning of section \ref{s:HIT}.

\begin{proof}
  One begins by observing the existence in general of the following diagram
  \begin{equation*}
    \xymatrix{ X' \sm f^{-1}(Z) \ar[r] \ar[d] & X' \ar[r] \ar[d]  & \operatorname{Th} N_{f^{-1}(Z)
        \to X'} \ar@{-->}[d] \\ 
      X \sm Z\ar[r] & X \ar[r]  & \operatorname{Th} N_{Z
        \to X} }
  \end{equation*}
  where the dotted arrow exists for reasons of general nonsense. 

  There is in general a map on cohomology arising from the given diagram of cofiber sequences, but
  we cannot at this stage predict the behavior of the map induced by the dotted arrow.
   When the map $X' \to X$ is flat, the pullback $f^{-1}(Z) \to Z$ is too. We identify the motivic
    cohomology groups with the higher Chow groups, giving the localization sequence
  \[ 
  \xymatrix{  \ar[r] &   CH^{*}(Z, *)\ar@{-->}[d] \ar[r] & CH^{*}(X, *) \ar[r] \ar[d] & \ar[d]
    \ar[r]  CH^{*}(X \sm Z, *) & \\
    \ar[r] &  CH^{*}(f^{-1}(Z), *) \ar[r] & CH^{*}(X', *) \ar[r] & 
    \ar[r] CH^{*}(X' \sm f^{-1}(Z), *) & }
    \]
    and in this case $\tau$, $\tau'$ become the classes of the cycles $[Z]$, $[f^{-1}Z]$. Since the map
    \[CH^*(Z,*) \to CH^*(f^{-1}(Z),*)\] 
    is the contravariant map associated with pull-back along a
    flat morphism, it follows immediately that $\tau \mapsto \tau'$.
\end{proof}

The following results are analogues of classically known facts.

\begin{prop}\alabel{p:Wincl}
  For $m'\le m$, there is a projection $W(n,m) \to W(n,m')$ given by omission of the last
  $m-m'$-vectors. On cohomology, this yields an inclusion 
  \begin{equation*}
    \M(\rho_n, \dots,
  \rho_{n-m'+1})/I \to \M(\rho_n, \dots,\rho_{n-m'+1}, \dots, \rho_{n-m+1})/I
  \end{equation*}
\end{prop}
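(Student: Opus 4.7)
The plan is to reduce immediately to the one-step case $m' = m-1$, since the projection $W(n,m) \to W(n,m')$ factors as a composition of one-step projections $W(n,j) \to W(n,j-1)$ obtained by successively discarding the last column. If each of these induces on cohomology the inclusion
\[ \M[\rho_n, \dots, \rho_{n-j+2}]/I \hookrightarrow \M[\rho_n, \dots, \rho_{n-j+1}]/I \]
sending each generator to itself, composing them yields the stated result.

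For the one-step case $\pi\colon W(n,m) \to W(n,m-1)$, I would return to the geometric setup of the proof of Proposition \ref{p:CohW1}. There $W(n,m)$ is realized as $E \sm F$, where $E$ is the trivial $\A^n$-bundle over $W(n,m-1)$ parametrizing an additional column vector and $F \subset E$ is the trivial $\A^{m-1}$-subbundle on which that column lies in the span of the existing ones. By construction $\pi$ is precisely the composition
\[ W(n,m) = E \sm F \hookrightarrow E \longrightarrow W(n,m-1) \]
of the open immersion with the bundle projection. Hence $\pi^*$ factors as the bundle isomorphism $H^{*,*}(W(n,m-1)) \isomto H^{*,*}(E)$ followed by the restriction $H^{*,*}(E) \to H^{*,*}(E \sm F)$, and this restriction is exactly the left-hand injection in the short exact sequence
\[ 0 \to H^{*,*}(W(n,m-1)) \to H^{*,*}(W(n,m)) \to H^{*,*}(W(n,m-1))\rho_{n-m+1} \to 0 \]
appearing in the proof of Proposition \ref{p:CohW1}. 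Thus $\pi^*$ is injective, with image the direct summand complementary to $H^{*,*}(W(n,m-1))\rho_{n-m+1}$ in the splitting.

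What remains is bookkeeping. By the inductive construction in Proposition \ref{p:CohW1}, the classes $\rho_i \in H^{*,*}(W(n,m))$ for $i \geq n-m+2$ are defined as the images of their namesakes under the very map $\pi^*$ just identified, so $\pi^*(\rho_i) = \rho_i$ by definition, and the relations $\rho_i^2 = a_{n,i}\rho_{2i-1}$ automatically go to the corresponding relations in the target. The only genuinely substantive step is the geometric identification of $\pi$ with the composite $E \sm F \hookrightarrow E \to W(n,m-1)$; once this is in hand the conclusion is a formal consequence of the splitting already established.
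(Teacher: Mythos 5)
Your proposal is correct and follows essentially the same route as the paper: reduce to $m'=m-1$, identify the projection with the restriction to $E \sm F$ of the bundle $E \to W(n,m-1)$ used in the proof of Proposition \ref{p:CohW1}, and read off injectivity and $\pi^*(\rho_i)=\rho_i$ from the split short exact sequence there. The paper states this in one sentence (``the result on cohomology holds by inspection of the proof''); your write-up merely makes that inspection explicit.
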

\begin{proof}
  It suffices to prove the case $m'=m-1$. In this case, the map $W(n,m) \to W(n,m-1)$ is
  the fiber bundle from which we computed the cohomology of $W(n,m)$, and the result on
  cohomology holds by inspection of the proof.
\end{proof}

\begin{prop} \alabel{p:GLNsurj}
  Given a nonzero rational point, $v \in (\A^n \sm 0)(k)$, and a complementary $n-1$-dimensional subspace $U$ such
  that $\lspan{v} \oplus U = \A^n (k)$, there is a closed immersion $\phi_{v,U}: W(n-1,m-1) \to W(n,m)$
  given by identifying $W(n-1,m-1)$ with the space of independent $m-1$-frames in $U$, and then
  prepending $v$. On cohomology, this yields the surjection $\M[\rho_n, \dots, \rho_{n-m+1}]/I \to
  \M[\rho_{n-1}, \dots, \rho_{n-m+1}]/I$ with kernel $(\rho_n)$.
\end{prop}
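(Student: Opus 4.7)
The plan is to construct $\phi_{v, U}$ as a closed immersion of smooth schemes and then determine its effect on cohomology by induction on $m$. For the construction, Lemma~\ref{l:Yoneda} lets us specify $\phi_{v, U}$ on $R$-points by $B \mapsto [\,v \,|\, \iota(B)\,]$, where $\iota: U \hookrightarrow \A^n$ is the inclusion of the complementary subspace. The image is cut out inside $W(n,m) \subset \A^{nm}$ by the linear conditions that the first column equals $v$ and the remaining columns lie in $U$; these identify it with the space of independent $(m-1)$-frames in $U$, namely $W(n-1, m-1)$. So $\phi_{v, U}$ is a closed immersion of smooth schemes.

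The base case of the induction is $m = 1$ (with $n \ge 2$): $\phi_{v, U}: \pt \to \A^n \sm 0$ sends $\rho_n \in H^{2n-1, n}$ to zero, since $\M^{2n-1, n} = 0$ by the vanishing $\M^{p, q} = 0$ for $p > q$. For the inductive step we use the commutative square
\begin{equation*}
\xymatrix{
W(n-1, m-1) \ar[r]^-{\phi_{v, U}} \ar[d]_{\pi} & W(n, m) \ar[d]^{\pi'} \\
W(n-1, m-2) \ar[r]^-{\phi_{v, U}} & W(n, m-1)
}
\end{equation*}
whose verticals omit the final column. Proposition~\ref{p:Wincl} realizes $\pi^*$ and $(\pi')^*$ as subring inclusions with the expected images, and the inductive hypothesis applied to the lower arrow, combined with commutativity, determines $\phi_{v, U}^*(\rho_i)$ for every $n-m+2 \le i \le n$: $\rho_n \mapsto 0$ and $\rho_i \mapsto \rho_i$ for $n-m+2 \le i \le n-1$.

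To compute $\phi_{v, U}^*(\rho_{n-m+1})$ I invoke the naturality of the localization triangle defining this class. Recall from the proof of Proposition~\ref{p:CohW1} that $\rho_{n-m+1}$ arises from the inclusion $F \subset E = W(n, m-1) \times \A^n$ of the sub-bundle on which the last column lies in the column-span of the first $m-1$; the analogous pair $(F' \subset E')$ governs $W(n-1, m-1)$. The map $\phi_{v, U}$ extends to a closed immersion of bundle-pairs $\phi: (E', F') \hookrightarrow (E, F)$ via $(A, w) \mapsto ([\,v\,|\,\iota(A)\,], \iota(w))$, with $\phi^{-1}(F) = F'$ as schemes. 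A dimension count confirms that $E'$ and $F$ meet transversely in $E$ along $F'$, so intersection-theoretic pullback delivers $\phi^*[F] = [F']$ in higher Chow groups, that is, $\tau \mapsto \tau'$. Comparing the resulting short exact sequences bidegree-by-bidegree shows that $\phi_{v, U}^*(\rho_{n-m+1}) - \rho_{n-m+1}$ lies in $\pi^* H^{*,*}(W(n-1, m-2))$ in bidegree $(2(n-m+1)-1, n-m+1)$, a subgroup which is zero when $m < n$ (again by $\M^{p, q} = 0$ for $p > q$); the case $m = n$ is dispatched by absorbing any such correction into the choice of the generator $\rho_1$ of $H^{1,1}(W(n-1, n-1))$. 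Altogether this gives $\ker \phi_{v, U}^* = (\rho_n)$.

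The chief obstacle is the step $\tau \mapsto \tau'$, since Lemma~\ref{l:techbff} does not apply verbatim: the closed immersion $\phi$ admits no global flat splitting, because the base map $\phi_{v, U}: W(n-1, m-2) \to W(n, m-1)$ is itself a closed embedding. Two viable remedies are (a) to verify the transverse dimension count directly and invoke the cycle-pullback property of higher Chow groups along regular closed immersions, or (b) to factor $\phi$ through $W(n-1, m-2) \times \A^n$ as $(\phi_{v, U} \times \id) \circ (\id \times \iota)$ and apply Lemma~\ref{l:techbff} to the factor $\id \times \iota$, which does admit the global flat splitting $\id \times p_U$ coming from the projection $\A^n = U \oplus \lspan{v} \twoheadrightarrow U$.
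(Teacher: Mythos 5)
Your overall strategy matches the paper's: induction on $m$, the commuting square with the column-omitting projections of proposition \ref{p:Wincl} to handle $\rho_n,\dots,\rho_{n-m+2}$, and then a naturality statement $\tau \mapsto \tau'$ for the localization sequences of $Z_{n,m} \subset E_{n,m}$ and $Z_{n-1,m-1}\subset E_{n-1,m-1}$ to pin down $\rho_{n-m+1}$. You also correctly identify the crux: lemma \ref{l:techbff} does not apply to the closed immersion $E_{n-1,m-1}\to E_{n,m}$. But neither of your two remedies closes this gap. Remedy (a) simply asserts the hard point: that the map of Thom spaces (the ``dotted arrow'' produced by general nonsense) is computed, under the identification with higher Chow groups, by an intersection-theoretic pullback along a regular closed immersion sending $[Z_{n,m}]$ to $[Z_{n-1,m-1}]$. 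The paper explicitly notes that one cannot at this stage predict the behavior of that induced map, and lemma \ref{l:techbff} is precisely the device built to control it --- but only for maps that are flat or split by a flat map; a transversality count does not by itself identify the abstract Thom-space map with a cycle-level Gysin pullback compatible with the boundary, and no such compatibility is available in the paper's framework. Remedy (b) only relocates the problem: after factoring off $\id\times\iota$ (which the lemma does handle, via the flat splitting $\id\times p_U$), the remaining factor $\phi_{v,U}\times\id \colon W(n-1,m-2)\times\A^n \to W(n,m-1)\times\A^n$ is again a closed immersion, neither flat nor split by a flat map --- there is no global retraction $W(n,m-1)\to W(n-1,m-2)$, since projecting the columns into $U$ can destroy full rank --- so lemma \ref{l:techbff} still does not apply and the original difficulty is untouched.

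The missing idea is the paper's factorization through an \emph{open} subscheme rather than a product: let $U_{n,m}\subset E_{n,m}$ be the dense open locus of matrices whose relevant corner entry is a unit and whose inner block lies in $W(n-1,m-2)$. Then $E_{n-1,m-1}\to U_{n,m}$ \emph{is} split by a flat map (projection onto the bottom-right block, which lands in $E_{n-1,m-1}$ exactly because of the openness condition), and $U_{n,m}\to E_{n,m}$ is an open immersion, hence flat; applying lemma \ref{l:techbff} to each factor gives $\tau\mapsto\tau'$. Your concluding bidegree argument is then fine for $m<n$ (the relevant group of $H^{*,*}(W(n-1,m-2))$ in bidegree $(2(n-m+1)-1,\,n-m+1)$ vanishes), and your observation that for $m=n$ there is a potential $\M^{1,1}$-ambiguity in bidegree $(1,1)$ is a legitimate point (the paper glosses it by declaring its square to consist of isomorphisms); but ``absorbing the correction into the choice of $\rho_1$'' should be done with care, since the later arguments (theorem \ref{t:MainComp} and theorem \ref{th:mainch1}) use the compatibility $\phi^*(\rho_i)=\rho_i$ for the specific generators produced by the localization sequences.
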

\begin{proof}
  We prove this by induction on the $m$, which is to say we deduce the case $(n,m)$ from the case
  $(n, m-1)$. The base case of $m=1$ is straightforward.


  Recall that we compute the cohomology of $W(n,m)$ by forming a trivial bundle $E_{n,m} \weq
  W(n,m-1)$ over $W(n,m-1)$, which
  on the level of $R$-points  consists of
  matrices
  \begin{equation*}
    \begin{pmatrix} &  & & v_1 \\ & A & & \vdots \\ & & & v_n \end{pmatrix}
  \end{equation*}
  and removing the trivial closed sub-bundle $Z_{n,m}$ where the vector $(v_1,\dots,v_n)$ is in the span of the
  columns of $A$. There is then an open
  inclusion 
  \[ E_{n,m} \sm Z_{n,m} \isom  W(n,m) \to E_{n,m} \weq W(n,m-1)\]

  The inclusion $\phi_{v,U}: W(n-1,m-1) \to W(n,m)$, without loss of generality can be assumed to act on field-valued points as as
  \begin{equation*}
   \xymatrix{ B \ar@{|->}[rr]^{\phi_{v,U}} & & {\displaystyle \begin{pmatrix} 1 & 0 \\ 0 & B \end{pmatrix}}}
  \end{equation*}
  We abbreviate this map of schemes to $\phi$, and denote the analogous maps $W(n-1,m-2) \to W(n,
  m-1)$, $Z_{n-1,m-1} \to Z_{n,m}$, $E_{n-1,m-1} \to E_{n,m}$ etc.~also by $\phi$ by abuse of
  notation. The following are pull-back diagrams
  \begin{equation*}
    \xymatrix{E_{n-1,m-1} \ar[r] & E_{n,m} \\ Z_{n-1,m-1} \ar[u] \ar[r] & Z_{n,m} \ar[u] } \quad \xymatrix{ E_{n-1,m-1} \ar[r] & E_{n,m}
      \\ W(n-1,m-1) \ar[u] \ar^{\phi}[r] & W(n,m) \ar[u] }
  \end{equation*}
  The second square above is homotopy equivalent to 
  \begin{equation*}
    \xymatrix{ W(n-1,m-2) \ar^{\phi}[r] \ar[d] & W(n,m-1) \ar[d] \\ W(n-1,m-1) \ar^{\phi}[r] & W(n,m) }
  \end{equation*}
  from which we deduce that the map
  \[\phi^*: H^{*,*}(W(n,m)) \to  H^{*,*}(W(n-1,m-1))\] satisfies $\phi^*( \rho_j) = \rho_j$ for $n-m + 2 \le j \le n-1$ and
  $\phi^*(\rho_n) = 0$, since this holds for $W(n-1,m-2) \to W(n,m-1)$ by induction. 

  The hard part is the behavior of the element $\rho_{n-m+1}$, which is in the kernel of \[H^{*,*}(W(n,m)) \to
  H^{*,*}(W(n,m-1))\]
  Recall that $\rho_{n-m+1} \in H^{*,*}(W(n,m))$ is the preimage of the Thom
  class $\tau$ under the map 
  \[\partial: H^{*,*}(W(n,m)) \to H^{*,*}(W(n,m-1))\tau = H^{*,*}(Z_{n,m})\tau\]
 
  We should like to assert that the map of localization sequences
 \begin{equation} \alabel{e:mcchris} 
    \xymatrix@C=18pt{ \ar[r] & H^{*,*}(W(n,m)) \ar^{\phi^*}[d] \ar^{\partial}[r] & H^{*,*}(Z_{n,m}) \tau
      \ar[d] \ar[r] & H^{*,*}(E_{n,m})\ar[d] \ar[r] & \\ 
    \ar[r] & H^{*,*}(W(n-1,m-1)) \ar^{\partial}[r] & H^{*,*}(Z_{n-1,m-1}) \tau'
       \ar[r] & H^{*,*}(E_{n-1,m-1})  \ar[r] & }
  \end{equation}
  one has $\tau \mapsto \tau'$, because then chasing the commutative square of isomorphisms
  \begin{equation*}
    \xymatrix{ \Z \rho_{n-m-1} = H^{2n-2m-1, n-m}(W(n,m)) \ar[r] \ar[d] & \ar[d] H^{0,0}(Z_{n,m} \tau = \Z
      \tau  \\ 
      \Z \rho_{n-m-1} = H^{2n-2m-1, n-m}(W(n-1,m-1)) \ar[r] & H^{0,0}(Z_{n-1,m-1} \tau = \Z
      \tau'  }
  \end{equation*}
  we have $\rho_{n-m-1} \mapsto \rho_{n-m-1}$ as required.

  The difficulty is that the map $g:E_{n-1, m-1} \to E_{n,m}$ is a closed immersion, rather than a
  flat or split map, for which we have deduced this sort of naturality result in lemma \ref{l:techbff}.  We can however
  factor $g$ into such maps, which we denote only on the level of points, the obvious
  scheme-theoretic definitions are suppressed. Let $U_{n,m}$ denote the variety of $n,m$-matrices which (on the level
  of $\overline{k}$-points) have a decomposition as
  \begin{equation*}
    \begin{pmatrix} u & * & * \\ * & A & * \end{pmatrix}
  \end{equation*}
  where $u \in \overline{k}^*$, and $A \in W(n-1,m-2)(\overline{k})$. It goes without saying that
  this is a variety, since the conditions amount to the nonvanishing of certain minors. It is also
  easily seen that $U_{n,m}$ is an open dense subset of $E_{n,m}$. We have a map $E_{n-1,m-1} \to
  E_{n,m}$, given by
  \begin{equation*}
    B \mapsto \begin{pmatrix} 1  & *\\ * & B \end{pmatrix} 
  \end{equation*}
  and this map is obviously split by the projection onto the bottom-right $n-1 \times
  m-1$-submatrix. The composition $E_{n-1, m-1} \to U_{n,m} \to E_{n,m}$ is a factorization of the
  map $E_{n-1, m-1} \to E_{n,m}$ into a split map followed by an open
  immersion. The splitting of $E_{n-1, m-1}$ is a projection onto a factor, and since all schemes
  are flat over $\pt$, the splitting is flat as well. We may now use lemma \ref{l:techbff} twice to conclude that in diagram
  \eqref{e:mcchris} we have $\tau \mapsto  \tau'$, so that $\rho_{n-m+1} \mapsto \rho_{n-m+1}$, as
  asserted.
\end{proof}



\section{Higher Intersection Theory} \alabel{s:HIT}

We shall, as is standard, denote the algebraic $d$-simplex $\spec k[x_0 , \dots, x_d] / (x_0 + \dots
+ x_d -1) \isom \A^d$ by $\Delta^d$. The object $\Delta^\bullet$ is cosimplicial in an obvious
way. It shall be convenient later to identify $\Delta^1$ in particular with $\A^1 = \Spec k[t]$.

Let $X$ be a scheme of finite type over a field. The higher Chow groups of $X$, denoted $CH^i(X,d)$
are defined in \cite{BLOCH86} as the homology of a certain complex:
\begin{equation*}
  CH^i(X,d) = H_d(z^i(X,*))
\end{equation*}
where $z^i(X,d)$ denotes the free abelian group generated by cycles in $X \times \Delta^d$ meeting
all faces of $X \times \Delta^d$ properly. We denote the differential in this complex by $\delta$.

There is a comparison theorem, see \cite[lecture 19]{MVW}, \cite{ALLAGREE}, which states
that for any smooth scheme $X$ over any field $k$, there is an isomorphism between the motivic
cohomology groups and the higher Chow groups
\begin{equation*}
  CH^i(X,d) = H^{2i-d,i}(X,\Z)
\end{equation*}
or the equivalent with $\Z$ replaced by a general coefficient ring $R$. The
products on motivic cohomology and on higher Chow groups are known to coincide,
see\cite{WPRODAGREE}. In the difficult paper \cite{BLOCHMOVE}, the following result is proven in an
equivalent form (the strong moving lemma)
\begin{theorem}[Bloch] \alabel{t:BlochHardMove}
  Let $X$ be an equidimensional scheme, $Y$ a closed equidimensional subscheme of codimension $c$ in $X$, $U \isom X \sm Y$. Then for all $i$, there is an exact sequence of complexes
  \begin{equation*}
    \xymatrix{ 0 \ar[r] & z^{i-c}(Y, *) \ar[r] & z^i(X,*) \ar[r] & z^i_a(U,*) \ar[r] & 0}
  \end{equation*}
  where $z^i_a(U,*)$ is the subcomplex of $z^i(U,*)$ generated by subvarieties $\gamma$ whose
  closure $\overline{\gamma} \clsub X \times \Delta^*$ meet all faces properly. The inclusion of
  complexes $z_a^i(U,*) \subset z^i(U,*)$ induces an isomorphism on homology groups.
\end{theorem}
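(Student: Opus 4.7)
Two claims are packaged into the theorem: a short exact sequence of complexes, and a quasi-isomorphism. I would treat them separately, since the first is essentially formal while the second is the substance.

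For the short exact sequence, my plan is to decompose any $\alpha \in z^i(X, d)$ as $\alpha = \alpha_Y + \alpha_U$, separating the irreducible components supported in $Y \times \Delta^d$ from those whose generic points lie over $U$. Since $Y$ has codimension $c$ in $X$ and both are equidimensional, a component of $\alpha_Y$ has codimension $i - c$ in $Y \times \Delta^d$, and proper intersection with faces of $X \times \Delta^d$ restricts to proper intersection with faces of $Y \times \Delta^d$; this identifies the kernel of the restriction $z^i(X,d) \to z^i(U,d)$ with $z^{i-c}(Y,d)$. The image of this restriction lies in $z^i_a(U,d)$ because the closure of any generator of the image appears already as a component of $\alpha$ and hence meets faces properly. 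Conversely, any generator of $z^i_a(U,d)$ is by definition the restriction of its closure in $X \times \Delta^d$, which lies in $z^i(X,d)$, so surjectivity is automatic.

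For the quasi-isomorphism $z^i_a(U,*) \hookrightarrow z^i(U,*)$, my strategy would be to construct a chain homotopy operator from suitable moves. Concretely, I would look for endomorphisms $\psi$ of an affine space in which $X$ is embedded, together with families of interpolations $H\colon \A^d \times \A^1 \to \A^d$ between $\psi$ and the identity, arranged so that pulling a given cycle $\gamma \in z^i(U,d)$ back along $H$ yields a chain in $z^i(U, d+1)$ whose boundary equals $\gamma - \psi^* \gamma$, and so that $\psi^* \gamma$ already lies in $z^i_a(U,d)$. A generic $\psi$ drawn from a large enough group, such as $\Gl$ acting on the ambient space, will move any individual cycle into proper position, so the problem reduces to choosing these operators in a manner compatible with the face and degeneracy maps of $\Delta^\bullet$.

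The main obstacle is precisely that compatibility. Assembling pointwise moves into an actual chain homotopy defined on all of $z^i(U,*)$, uniformly in $d$, is the combinatorial and geometric heart of the strong moving lemma of \cite{BLOCHMOVE}. I would follow Bloch's induction on faces, using a large group of ambient automorphisms to clear the bad loci one codimension at a time; no shortcut is available, and for the purposes of the present paper one simply invokes the result of loc.~cit.
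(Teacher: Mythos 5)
The paper gives no proof of this statement: it is quoted as Bloch's theorem and referred entirely to \cite{BLOCHMOVE}, which is exactly what you do for the substantive half (the quasi-isomorphism $z^i_a(U,*)\subset z^i(U,*)$), while your verification of the short exact sequence of complexes (kernel of restriction $=$ cycles supported on $Y\times\Delta^\bullet$, identified with $z^{i-c}(Y,*)$ by equidimensionality, and surjectivity onto $z^i_a$ via closures) is the standard formal argument and is correct. So your proposal is correct and takes essentially the same route as the paper, just with the routine bookkeeping made explicit.
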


For a cycle $\alpha \in z^i(U,d)$, we can write $\alpha = \sum_{i=1}^N n_i A_i$ for some
subvarieties $A_i$ of $U \times \Delta^d$, and $n_i \in \Z \sm \{0\}$. We can form the
scheme-theoretic closure of $A_i$ in $X \times \Delta^d$, denoted $\overline{A_i}$. We remark that
$\overline{A_i} \times_{X\times\Delta^d} (U \times \Delta^d) = A_i$ \cite[II.3]{HART}. We define
\begin{equation*}
  \overline{\alpha} \sum_{i=1}^N n_i \overline{A_i}
\end{equation*}
We say that $\overline{\alpha}$ meets a subvariety $K \clsub X$ properly if every $\overline{A_i}$
meets $K$ properly. Suppose $\alpha$ is such that $\overline{\alpha}$ meets the faces of $X\times
\Delta^d$ properly, then $\alpha = (U\to X)^*(\overline{\alpha})$, so $\alpha \in z^i_a(U, d)$.

\begin{prop} \alabel{p:snakelemma}
  As before, let $X$ be a quasiprojective variety, let $Y$ be a closed subvariety of pure
  codimension $c$, let $U = X-Y$ and let $\iota: U \to X$ denote the open embedding. Suppose $\alpha
  \in z^i(U,d)$ is such that $\overline{\alpha}$ meets the faces of $X \times \Delta^d$ properly,
  then the connecting homomorphism $\bd: CH^i(U,d) \to CH^i(Y,d-1)$ takes the class of $\alpha$ to the class
  of $\delta(\overline{\alpha})$ which happens to lie in the subgroup $z^{i-c}(Y,d-1)$ of $z^{i-c}(X,d-1)$.
\end{prop}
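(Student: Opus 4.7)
The plan is to identify $\partial$ with the connecting homomorphism of the short exact sequence of chain complexes supplied by Theorem \ref{t:BlochHardMove}, and then unwind the snake-lemma recipe in the special situation where $\alpha$ already admits a preferred lift to $z^i(X,d)$.

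First, I would observe that the hypothesis on $\alpha$ places us in the favorable subcomplex. By construction $\overline{\alpha} \in z^i(X,d)$ precisely because it meets every face of $X \times \Delta^d$ properly, and restriction along $U \to X$ sends $\overline{\alpha}$ to $\alpha$; in particular $\alpha$ lies in the subcomplex $z^i_a(U,\ast)$ whose $d$th term is generated by cycles with this extendability property. Since Theorem \ref{t:BlochHardMove} asserts that $z^i_a(U,\ast) \hookrightarrow z^i(U,\ast)$ is a quasi-isomorphism, the class $[\alpha] \in CH^i(U,d)$ is faithfully represented within $z^i_a(U,d)$, so I am free to compute $\partial[\alpha]$ using the short exact sequence
\[ 0 \to z^{i-c}(Y,\ast) \to z^i(X,\ast) \to z^i_a(U,\ast) \to 0. \]

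Next, I would run the standard snake-lemma construction on this sequence. Choose the lift $\overline{\alpha} \in z^i(X,d)$ of $\alpha$ and apply the differential $\delta$ of the middle complex to obtain $\delta(\overline{\alpha}) \in z^i(X,d-1)$. Because $\alpha$ is a cycle in $z^i_a(U,d)$ and the projection $z^i(X,\ast) \to z^i_a(U,\ast)$ is a map of chain complexes, the image of $\delta(\overline{\alpha})$ in $z^i_a(U,d-1)$ equals $\delta(\alpha)=0$. Exactness of the sequence at the middle term in degree $d-1$ then forces $\delta(\overline{\alpha})$ to come from a (necessarily unique) element of $z^{i-c}(Y,d-1)$, and by definition of the connecting map this element represents $\partial[\alpha] \in CH^{i-c}(Y,d-1)$.

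I do not anticipate a serious obstacle; the only point that requires a bit of care is confirming that the naive closure operation $\alpha \mapsto \overline{\alpha}$ genuinely provides a set-theoretic section of the surjection $z^i(X,d) \twoheadrightarrow z^i_a(U,d)$ on the particular element $\alpha$ at hand, which is exactly the content of the scheme-theoretic remark that $\overline{A_i} \times_{X \times \Delta^d}(U\times \Delta^d) = A_i$ already recorded in the paragraph preceding the proposition. With that section in hand, the above snake-chase is completely formal. The fact that $\delta(\overline{\alpha})$ is supported on $Y \times \Delta^{d-1}$, and in particular lies in the subgroup $z^{i-c}(Y,d-1) \subset z^{i-c}(X,d-1)$ in the sense described, is then automatic from the exactness of the sequence of complexes.
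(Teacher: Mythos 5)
Your proposal is correct and follows exactly the paper's argument: observe that the hypothesis on $\overline{\alpha}$ places $\alpha$ in the subcomplex $z^i_a(U,\ast)$, then apply the snake lemma to the short exact sequence of Theorem \ref{t:BlochHardMove} and chase $\overline{\alpha}$ through it. Your write-up simply spells out the diagram chase that the paper leaves implicit.
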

\begin{proof}
  First, since $\alpha$ is such that $\overline{\alpha}$ meets the faces of $X\times \Delta^d$
  properly, it follows that $\alpha = \iota^*(\overline{\alpha})$, so $\alpha \in z^i_a(U , d)$.

  The localization sequence arises from the short exact sequence of complexes
  \begin{equation*}
    \xymatrix{ 0 \ar[r] & z^{i-c}(Y, *) \ar[r] & z^i(X,*) \ar[r] & z^i_a(U,*) \ar[r] & 0}
  \end{equation*}
  via the snake lemma.
A diagram chase now completes the argument.
\end{proof}

\begin{prop}\alabel{p:gentoothpaste}
  Consider $\A^n\sm\{0\}$ as an open subscheme of $\A^n$ in the obvious way, so there is a localization
  sequence in higher Chow groups for $\pt, \A^n$ and $\A^n \sm \{0\}$. The higher Chow groups
  $CH(\pt) = \M$ are given an explicit generator, $\nu$. Write $H^{2n-1,n}(\A^n\sm \{0\},\Z)=
  CH^n(\A^n\sm \{0\},1) = \Z\gamma \oplus Q$, where $Q=0$ for $n\geq 2$ and $Q= k^*$ for $n=1$, and
  where $\gamma$ is such that the boundary map
  \begin{equation*}
   \partial:  CH^n(\A^n \sm \{0\}, 1)  \to CH^0(\pt, 0)
  \end{equation*}
  maps $\gamma$ to $\nu$. The element $\gamma$ may be represented by any curve in
  \begin{equation*}
    \A^n\times \Delta^1 = \spec k[x_1,\dots, x_n,t]
  \end{equation*}
  which fails to meet the hyperplane $t=0$ and meets $t=1$
  with multiplicity one at $x_1=x_2 = \dots = x_n =0$ only.
\end{prop}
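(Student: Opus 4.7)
The plan is to realize $\gamma$ explicitly using the description of the boundary map provided by Proposition \ref{p:snakelemma}. Given any curve $C \subset \A^n \times \Delta^1$ of the type described, I would first take $\alpha$ to be its restriction to $(\A^n \sm \{0\}) \times \Delta^1$ and verify that $\alpha$ lies in $z^n(\A^n \sm \{0\}, 1)$, i.e.\ meets both faces properly. Indeed, on $t=0$ the restriction is empty by hypothesis, and on $t=1$ it is also empty since $C$ meets $t=1$ only at the origin, which has been removed. So $\alpha$ trivially meets all faces properly. Its scheme-theoretic closure in $\A^n \times \Delta^1$ is exactly $C$, which also meets the faces of $\A^n \times \Delta^1$ properly (emptily on $t=0$, and transversely at one point on $t=1$).

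Next I would apply Proposition \ref{p:snakelemma}: the connecting map $\partial: CH^n(\A^n \sm\{0\},1) \to CH^0(\pt, 0)$ sends $[\alpha]$ to the class of $\delta(\overline{\alpha}) = \delta(C)$. The simplicial differential reads off the (signed) restriction of $C$ to the faces $t=0$ and $t=1$: the first contributes nothing since $C \cap \{t=0\} = \emptyset$, while the second contributes the single reduced point $(0,\dots,0)$, which by definition represents the generator $\nu$ of $CH^0(\pt,0) = \Z$. Hence $\partial[\alpha] = \nu$.

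To conclude that $[\alpha] = \gamma$ in the decomposition $CH^n(\A^n \sm\{0\}, 1) = \Z\gamma \oplus Q$, I would inspect the localization sequence for $\{0\} \subset \A^n$. For $n \geq 2$, one has $CH^n(\A^n, 1) = H^{2n-1,n}(\pt;\Z) = 0$ and $CH^n(\A^n,0) = 0$, so $\partial$ is an isomorphism onto $\Z\nu$ and $Q=0$; the cycle $\alpha$ is therefore the unique class mapping to $\nu$. For $n=1$, the relevant part of the sequence reads
\[ k^* = CH^1(\A^1, 1) \longrightarrow CH^1(\A^1 \sm\{0\}, 1) \stackrel{\partial}{\longrightarrow} \Z \longrightarrow 0, \]
so $\partial$ is surjective with kernel equal to the summand $Q = k^*$, and $[\alpha]$ serves as the section of $\partial$ picking out the generator $\gamma$. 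The only subtle point, and the one to be careful about, is the identification in Proposition \ref{p:snakelemma} of the snake-lemma boundary with the scheme-theoretic restriction of $\overline{\alpha}$ to $t=1$; once that is in hand, the calculation is essentially mechanical.
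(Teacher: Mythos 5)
Your proposal is correct and follows essentially the same route as the paper: identify the low-degree terms of the localization sequence via homotopy invariance (so the kernel of $\partial$ is $Q$, which is $0$ for $n\ge 2$ and $k^*$ for $n=1$), and apply Proposition \ref{p:snakelemma} to the closure of the curve to see that $\partial$ sends its class to the class of the reduced point, i.e.\ the generator $\nu$ of $CH^0(\pt,0)=\Z$. You merely spell out more explicitly the verification that the closure meets the faces properly and the computation of $\delta(\overline{\alpha})$, which the paper compresses into a citation of the same proposition and ``straightforward homological algebra.''
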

\begin{proof}
  The low-degree part of the localization sequence is
  \begin{equation*}
    \begin{split}
      \xymatrix@C=18pt{CH^0(\pt,1) = 0 \ar[r] & CH^n(\A^n,1) = Q \ar[r] & CH^n(\A^n \sm  0, 1) \ar[r]^{\bd} &
        CH^0(\pt,0)}\\
      \xymatrix{ \ar[r] & CH^0(\A^n,0) = \Z \ar[r] & CH^0(\A^n \sm 0,0) = \Z \ar[r] & 0}
    \end{split}    
  \end{equation*}
  Suppose $C$ is a curve which does not meet $t=0$, and which meets $t=1$ with multiplicity one at
  $x_1=\dots=x_n=0$ only, then by proposition \ref{p:snakelemma} the cycle $[C] \in CH^{n}(A^n-0,1)$
  maps to the class of a point in $CH^0(\pt, 0) = \Z$, which is a generator, \cite{FULTON}. The
  assertion now follows from straightforward homological algebra.
\end{proof}

\begin{cor}\alabel{p:curvetoothpaste}
  Suppose $p \in \A^n\sm \{0\}$ is a $k$-valued point. Write $p=(p_1,\dots,p_n)$. The curve given by the
  equation
  \begin{equation*}
    \gamma_p: \quad (x_1-p_1)t + p_1 = (x_2-p_2)t+p_2 = \dots = (x_n-p_n)t+p_n = 0
  \end{equation*}
  represents a canonical generator of $CH^n(\A^n\sm\{0\},1)$. 
\end{cor}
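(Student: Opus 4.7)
The plan is to reduce the claim to Proposition~\ref{p:gentoothpaste} by checking directly that $\gamma_p$ is an irreducible curve in $\A^n \times \Delta^1$ which avoids the face $\{t=0\}$ and meets $\{t=1\}$ as a single reduced point at the origin $x_1=\dots=x_n=0$.

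First I would verify that $\gamma_p$ is a genuine integral curve. The defining equations are linear in the $x_i$'s for each fixed $t$, which suggests the parametrization $x_i = p_i(t-1)/t$, valid away from $t=0$. I would make this precise by exhibiting the ring map
\[
k[x_1,\ldots,x_n,t]/\bigl((x_i - p_i)t + p_i\bigr) \longrightarrow k[t,t^{-1}], \quad x_i \mapsto p_i(t-1)/t,\quad t \mapsto t,
\]
and checking that it is an isomorphism; surjectivity uses a nonzero coordinate $p_j$ to express $t^{-1} = (p_j - x_j)/p_j$ in the image, and injectivity is routine. Hence $\gamma_p \isom \G_m$, which is in particular an irreducible closed subvariety of $\A^{n+1}$ of dimension one.

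Next I would verify the two face conditions by substitution. Setting $t=0$ in each defining polynomial yields $p_i$, so the ideal $I + (t)$ is the unit ideal (since some $p_j$ is nonzero), whence $\gamma_p \cap \{t = 0\} = \emptyset$. Setting $t=1$ yields the equations $x_i = 0$, so the scheme-theoretic intersection $\gamma_p \cap \{t=1\}$ is cut out by $(x_1,\ldots,x_n,t-1)$, the maximal ideal of the origin in the fibre; it is therefore a single reduced point, i.e.~multiplicity one at $x_1=\dots=x_n=0$.

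Both hypotheses of Proposition~\ref{p:gentoothpaste} are then satisfied, and the corollary follows. No step in the argument presents a real obstacle: the linearity of the defining equations in the $x_i$'s trivialises both the geometric identification $\gamma_p \isom \G_m$ and the multiplicity computation at $t=1$.
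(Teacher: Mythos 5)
Your proposal is correct and follows the same route as the paper, which simply asserts that Proposition~\ref{p:gentoothpaste} applies; you have spelled out the easy verification (the parametrization $\gamma_p \isom \G_m$, the empty intersection with $t=0$, and the reduced single point at the origin on $t=1$) that the paper leaves to the reader.
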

\begin{proof}
  One verifies easily that the proposition applies.
\end{proof}

\begin{cor} \alabel{p:multminusone}
  Consider the map $\A^n \sm \{0\} \to \A^n \sm \{0\}$ given by multiplication by $-1$. This map
  induces the identity on cohomology.
\end{cor}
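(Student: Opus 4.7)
The plan is to reduce to a single generator, identify it via the comparison with higher Chow groups, and then exploit the explicit curve representatives furnished by Corollary \ref{p:curvetoothpaste}.

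By Proposition \ref{p:CohW1}, $H^{*,*}(\A^n \sm \{0\}; R) = \M_R[\rho_n]/(\rho_n^2)$. The map $m$ induced by multiplication by $-1$ is a morphism of $k$-schemes, so $m^*$ is an $\M_R$-algebra endomorphism, and it suffices to verify $m^* \rho_n = \rho_n$. Passing to higher Chow groups via the comparison theorem, the class $\rho_n \in H^{2n-1, n}(\A^n \sm \{0\}; \Z)$ corresponds to the element $\gamma$ of Proposition \ref{p:gentoothpaste}; by Corollary \ref{p:curvetoothpaste}, this class is represented by the curve $\gamma_p \subset \A^n \times \Delta^1$ cut out by the equations $(x_i - p_i)t + p_i = 0$ for any chosen $p = (p_1, \dots, p_n) \in (\A^n \sm \{0\})(k)$.

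Since $m$ is an isomorphism, it pulls cycles back by scheme-theoretic preimage. A direct substitution $x \mapsto -x$ in the defining equations will identify $m^{-1}(\gamma_p)$ with the analogous curve $\gamma_{-p}$ attached to the nonzero point $-p$; Corollary \ref{p:curvetoothpaste} then applies once more to give $[\gamma_{-p}] = \gamma$. Thus $m^*\rho_n = \rho_n$, as desired.

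The main step --- really the only substantive one --- is the elementary polynomial identification $m^{-1}(\gamma_p) = \gamma_{-p}$. There is no serious obstacle, because the crucial input, namely that every curve of the prescribed shape represents the \emph{same} canonical class $\gamma$ rather than merely a class congruent modulo the kernel of $\partial$, has already been absorbed into Proposition \ref{p:gentoothpaste} and is precisely what rules out a potential discrepancy in the exceptional case $n=1$, where $Q = k^*$ is nonzero.
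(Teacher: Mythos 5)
Your argument is essentially the paper's own proof: the paper likewise observes that the preimage of the curve $\gamma_p$ under multiplication by $-1$ is $\gamma_{-p}$, and that both represent the same canonical generator of $CH^n(\A^n\sm\{0\},1)$ by corollary \ref{p:curvetoothpaste}. Your additional remarks (reducing to the generator $\rho_n$ via the $\M_R$-algebra structure, and noting that pullback along the isomorphism is scheme-theoretic preimage) merely make explicit what the paper leaves implicit, so the proposal is correct and takes the same route.
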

\begin{proof}
  The preimage of the curve $\gamma_{p}$ is the curve $\gamma_{-p}$, but both represent the same
  generator of $CH^n(\A^n \sm \{0\}, 1)$, so the result follows.
\end{proof}

We can now prove two facts about the cohomology of $W(n,m)$ that should come as no surprise, the
case of complex Stiefel manifolds being our guide.

\begin{prop} \alabel{p:multminusoneW}
  Let $\gamma: W(n,m) \to W(n,m)$ denote multiplication of the first column by $-1$. Then $\gamma^*$ is
 identity on cohomology
\end{prop}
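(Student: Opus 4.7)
Proof plan: We proceed by induction on $m$. The base case $m = 1$ reduces to multiplication by $-1$ on $\A^n \sm \{0\}$, which is exactly the content of Corollary~\ref{p:multminusone}.

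For the inductive step, the key structural observation is that $\gamma$ only alters the first column of a matrix and so commutes with the column-omission projection $\pi: W(n,m) \to W(n,m-1)$ used in the proof of Proposition~\ref{p:CohW1}. Under the resulting splitting
\[
H^{*,*}(W(n,m)) = \pi^* H^{*,*}(W(n,m-1)) \oplus \pi^* H^{*,*}(W(n,m-1))\cdot \rho_{n-m+1},
\]
the inductive hypothesis, combined with naturality of $\pi^*$, shows that $\gamma^*$ is already the identity on the first summand. It therefore suffices to show $\gamma^*(\rho_{n-m+1}) = \rho_{n-m+1}$.

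By construction, $\rho_{n-m+1}$ is singled out by the fact that the boundary $\partial$ of the localization triangle for $Z_{n,m} \subset E_{n,m}$ sends it to the Thom class $\tau$. The key geometric point is that $\gamma$ extends to an automorphism of the ambient bundle $E_{n,m}$ for which $\gamma^{-1}(Z_{n,m}) = Z_{n,m}$ scheme-theoretically, because multiplying the first column by $-1$ does not affect whether the last column lies in the span of the intermediate ones. Since an isomorphism is flat, Lemma~\ref{l:techbff} applies without any factorization trick and yields $\gamma^*(\tau) = \tau$. Hence $\partial(\gamma^*\rho_{n-m+1} - \rho_{n-m+1}) = 0$ and the difference lies in $\pi^* H^{2(n-m+1)-1,n-m+1}(W(n,m-1))$.

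A direct bidegree count using the presentation from Proposition~\ref{p:CohW1} and the vanishing $\M^{p,q}_R = 0$ for $p > q$ shows that this potential correction group vanishes whenever $n-m+1 \geq 2$, which takes care of almost all cases. The main obstacle is the boundary case $n = m$, where the correction can a priori lie in the nontrivial group $\M^{1,1}_R$ sitting in bidegree $(1,1)$ of $W(n,n-1)$; there one exploits the facts that $\gamma^2 = \id$ (which, combined with the inductive hypothesis on $W(n,n-1)$, forces the correction to be $2$-torsion) together with the cycle-level argument underlying Corollary~\ref{p:multminusone}, applied to a $\gamma$-invariant toothpaste representative of $\rho_1$ modelled on Corollary~\ref{p:curvetoothpaste}, to pin the correction down to zero.
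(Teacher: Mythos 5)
Your treatment of the range $m<n$ is sound and follows a genuinely different route from the paper's. You stay inside the localization triangle for $Z_{n,m}\subset E_{n,m}$, observe that $\gamma$ extends to an automorphism of $E_{n,m}$ preserving $Z_{n,m}$, apply Lemma \ref{l:techbff} (an isomorphism is indeed flat), and conclude that $\gamma^*\rho_{n-m+1}-\rho_{n-m+1}$ lies in the image of $H^{2r-1,r}(W(n,m-1))$ with $r=n-m+1$, which vanishes for $r\ge 2$; this is correct and arguably cleaner than the paper's argument, which instead reduces to $\Gl(n)$ via proposition \ref{p:Wincl}, then to the single top class $\rho_n$ by induction on $n$ using the inclusion of proposition \ref{p:GLNsurj}, and finishes with corollary \ref{p:multminusone} applied to the first-column projection $\Gl(n)\to\A^n\sm\{0\}$.

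The genuine gap is your last step, the case $r=1$, i.e.\ $m=n$ and the class $\rho_1$ on $\Gl(n)$. The $\gamma^2=\id$ observation only shows the correction is $2$-torsion in $\M^{1,1}$, which does not exclude $\{-1\}$, and the proposed ``$\gamma$-invariant toothpaste representative of $\rho_1$'' is not an argument: corollary \ref{p:curvetoothpaste} lives on $\A^n\sm\{0\}$, and in weight one the boundary characterizes $\rho_1$ only up to $\M^{1,1}$, so invariance of a representative decides nothing. Worse, no argument can close this gap, because the claim fails there: $H^{1,1}(\Gl(n);\Z)\cong\mathcal{O}^*(\Gl(n))=k^*\times\det^{\Z}$, the class $\rho_1$ is congruent to $\pm[\det]$ modulo constants, and $\det\circ\gamma=-\det$, so $\gamma^*\rho_1=\rho_1+\{-1\}$, nonzero with integral coefficients whenever $-1\neq 1$ in $k^*$; at best $\gamma^*$ is the identity modulo the ideal generated by $\{-1\}$. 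You have in fact isolated exactly the delicate point that the paper's own proof passes over silently: its induction bottoms out at $\Gl(1)=\Gm$, i.e.\ the $n=1$ case of corollary \ref{p:multminusone}, where the group $CH^1(\A^1\sm\{0\},1)$ acquires the extra $k^*$-summand and the cycle argument no longer pins down the class. So: your proof is complete for $m<n$, but for $m=n$ the statement in your final paragraph cannot be established, and the weight-one part of the assertion needs to be excluded or corrected rather than proved.
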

\begin{proof}
  By use of the comparison maps $\Gl(n) \to W(n,m)$, see proposition \ref{p:Wincl}, we see that it
  suffices to prove this for $\Gl(n)$. By means of the standard inclusion $\Gl(n-1) \to \Gl(n)$ and
  induction we see that it suffices to prove $\gamma^*(\rho_n) = \rho_n$, where $\rho_n$ is the
  highest-degree generator of $H^{*,*}(\Gl(n);R)$. By the comparison map again, we see that it
  suffices to prove that \[\tau: \A^n \sm \{0\} \to \A^n \sm \{0\}\] has the required property, but
  this is corollary \ref{p:multminusone}
\end{proof}

\begin{prop} \alabel{p:permutInvGln}
  Let $\sigma \in \Sigma_m$, the symmetric group on $m$ letters. Let $f_\sigma: W(n,m) \to W(n,m)$
  be the map that permutes the columns of $W(n,m)$ according to $\sigma$. Then $f_\sigma^*$ is the
  identity on cohomology.
\end{prop}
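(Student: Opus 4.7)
The plan is to exploit the fact that the $\Sigma_m$-action on $W(n,m)$ extends to the right action of $\Gl(m)$ on $W(n,m)$ by matrix multiplication, with permutations acting via permutation matrices. Since $\Sigma_m$ is generated by adjacent transpositions and $(-)^*$ is contravariantly functorial, it suffices to treat a single transposition $\sigma = (k,k+1)$.

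I would first establish an auxiliary fact: for any $g \in \Sl(m)(k)$, right multiplication $R_g : W(n,m) \to W(n,m)$, $v \mapsto vg$, induces the identity on cohomology. Over a field, $\Sl(m)$ is generated by elementary matrices $E_{ij}(a) = I + a e_{ij}$, and the assignment $t \mapsto E_{ij}(ta)$ defines a morphism $\A^1 \to \Sl(m)$ carrying $0$ to $I$ and $1$ to $E_{ij}(a)$. Transporting through the action yields an explicit $\A^1$-homotopy $\A^1 \times W(n,m) \to W(n,m)$ from $\id$ to $R_{E_{ij}(a)}$; concatenating finitely many such homotopies handles any $g \in \Sl(m)(k)$.

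Next, let $T_\sigma$ denote the permutation matrix of $\sigma$ and let $D_k \in \Gl(m)$ be the diagonal matrix with $-1$ in the $k$-th slot and $+1$ elsewhere. Then $\det(T_\sigma D_k) = +1$, so $T_\sigma D_k \in \Sl(m)$, and from $T_\sigma = (T_\sigma D_k)\cdot D_k$ (using $D_k^{-1} = D_k$) the auxiliary fact gives $f_\sigma^* = R_{T_\sigma}^* = R_{D_k}^*$, the action on cohomology of negating the $k$-th column. For $k = 1$ this is exactly proposition \ref{p:multminusoneW}. For $k > 1$ one applies the factorization trick once more: $D_k D_1 \in \Sl(m)$, so $R_{D_k}^* = R_{D_1}^*$, again the identity.

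The step I expect to require the most care is formalizing the passage from matrix identities over $k$ to genuine $\A^1$-homotopies of scheme morphisms $W(n,m) \to W(n,m)$, in particular concatenating the homotopies coming from the successive elementary factors of a given $g \in \Sl(m)(k)$. Beyond this bookkeeping, everything reduces formally to proposition \ref{p:multminusoneW} and the multiplicativity of the determinant.
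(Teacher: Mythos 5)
Your proposal is correct and is essentially the paper's own argument: the paper likewise uses that elementary column operations $e_{ij}(\lambda)$ are $\A^1$-homotopic to the identity, realizes the transposition via $e_{12}(1)e_{21}(-1)e_{12}(1)$ as the swap composed with a sign change on one column, and then invokes proposition \ref{p:multminusoneW} to remove the sign; your $\Sl(m)$/determinant bookkeeping (including the $D_kD_1$ trick for $k>1$, which the paper elides with a ``without loss of generality'') is just a repackaging of the same idea. The step you flag as delicate is not actually needed: rather than concatenating $\A^1$-homotopies, note that each elementary factor $R_{E_{ij}(a)}$ individually induces the identity on cohomology, and contravariant functoriality of pullback applied to the finite composite does the rest.
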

\begin{proof}
  We can reduce immediately to the case where $\sigma$ is a transposition, and from there we can
  assume without loss of generality that $\sigma$ interchanges the first two columns. Let $R$ be a
  finite-type $k$-algebra. We view $W(n,m)$ as the space whose $R$-valued points are $m$-tuples of
  elements in $R^n$ satisfying certain conditions which we do not particularly need to know. In the
  case $R = k$, the condition is that the matrix is of full-rank in the usual way.

  We can act on $W(n,m)$ by the elementary matrix $e_{ij}(\lambda)$
  \begin{equation*}
    e_{ij}(\lambda): (v_1, \dots, v_m, v_{m+1}) \mapsto (v_1, \dots, v_i + \lambda v_j, \dots, v_{m+1})
  \end{equation*}
  The two maps $e_{i,j}(\lambda)$ and $e_{i,j}(0) = \id$ are homotopic, so $e_{i,j}(\lambda)$
  induces the identity on cohomology. There is now a standard method to interchange two columns and
  change the sign of one by means of elementary operation $e_{ij}(\lambda)$, to wit
  $e_{12}(1)e_{21}(-1)e_{12}(1)$. We therefore know that the map
  \begin{equation*}
    (v_1, v_2, v_3, \dots, v_m) \mapsto (-v_2, v_1, v_3, \dots , v_m)
  \end{equation*}
  induces the identity on cohomology, but now proposition \ref{p:multminusoneW} allows us even to
  undo the multiplication by $-1$.
\end{proof}

\section{The Comparison Map: $\G_m \wedge \P^{n-1}_+ \to \Gl(n)$}

It will be necessary in this section to pay attention to basepoints. The group schemes $\G_m$ and
$\Gl_n$ will be pointed by their identity elements. When we deal with pointed spaces, we compute
reduced motivic cohomology for preference.

We establish a map in homotopy $\G_m \times \P^{n-1} \to \Gl(n)$, in fact we have a map from the
half-smash product
\begin{equation}
  \G_m\wedge\P^{n-1}_+ \to \Gl(n)
\end{equation}
 and we show this latter map induces isomorphism on a range of
cohomology groups.

We view $\P^{n-1}$ as being the space of lines in $\A^n$, and $\dual{\P}^{n-1}$ the space of
hyperplanes in $\A^n$. Define a space $\Pc^{n-1}$ as being the subbundle of $\P^{n-1} \times
\dual{\P}^{n-1}$ consisting of pairs $(L,U)$ where $L \cap U = 0$, or equivalently, where $L + U =
\A^n$. 

More precisely, we take $\P^{n-1}$ and construct 
\begin{equation*}
  \P^{n-1} \times \dual{\P}^{n-1} =\sProj_S(\mathcal{O}_S[y_0,\dots, y_{n-1}])
\end{equation*}
where $\sProj$ denotes the global projective-spectrum functor.  Let $Z$ denote the closed subscheme of $\P^{n-1} \times
\dual{\P}^{n-1}$ determined by the bihomogeneous equation $x_0y_0 + x_1y_1 + \dots + x_{n-1}y_{n-1}
= 0$. Then we define $\Pc^{n-1}$ as the complement $(\P^{n-1} \times \dual{\P}^{n-1}) \sm Z$.

\begin{prop}
  The composite $\Pc^{n-1} \to \P^{n-1} \times \dual \P^{n-1} \to \P^{n-1}$, the second map being
  projection, is a Zariski-trivializable bundle with fiber $\A^{n-1}$. In particular, $\Pc^{n-1}
  \weqto \P^{n-1}$
\end{prop}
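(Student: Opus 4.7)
The plan is to exhibit $\Pc^{n-1} \to \P^{n-1}$ as the complement of a projective subbundle inside the trivial $\P^{n-1}$-bundle over $\P^{n-1}$, and then invoke proposition \ref{p:projBundleTheorem}. Set $P = \P^{n-1} \times \dual{\P}^{n-1}$, viewed via the first projection as a rank-$n$ projective bundle over $\P^{n-1}$. By the very construction of $\Pc^{n-1}$, we have $\Pc^{n-1} = P \sm Z$, where $Z \subset P$ is the incidence variety cut out by the bihomogeneous equation $\sum_{i=0}^{n-1} x_i y_i = 0$.

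The main step is to verify that $Z \to \P^{n-1}$ is a projective subbundle of rank $n-1$. On the standard affine open $U_i = \{x_i \neq 0\} \subset \P^{n-1}$, after normalizing $x_i = 1$, the defining equation of $Z$ becomes $y_i + \sum_{j \neq i} x_j y_j = 0$; the coefficient of $y_i$ is the unit $1$, so this is an everywhere nonvanishing section of the relative $\mathcal{O}(1)$ on $U_i \times \dual{\P}^{n-1}$. Its vanishing locus is therefore a trivial $\P^{n-2}$-subbundle of $U_i \times \dual{\P}^{n-1}$, embedded linearly in each fiber. Gluing over the $U_i$ exhibits $Z$ as a rank-$(n-1)$ projective subbundle of $P$.

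With $P$ and $Z$ so identified, proposition \ref{p:projBundleTheorem} applies directly and yields a Zariski-trivializable fiber bundle structure on $\Pc^{n-1} = P \sm Z$ over $\P^{n-1}$ with fiber $\A^{n-1}$. Since $\A^{n-1}$ is $\A^1$-weakly equivalent to $\pt$, the earlier unnumbered proposition about Zariski-trivializable bundles with contractible fiber then produces the asserted weak equivalence $\Pc^{n-1} \weqto \P^{n-1}$.

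There is no serious obstacle here; the only step requiring attention is the identification of $Z$ as a subbundle, and this reduces to the elementary observation above that in each chart the defining linear form has a unit coefficient among the fiber variables. Both the fiber-bundle statement and the homotopical consequence then follow by citing the previously established propositions.
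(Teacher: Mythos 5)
Your proposal is correct and follows essentially the same route as the paper: both come down to the chart computation that over each standard affine open $U_i$ the incidence locus $Z$ is, after the linear change of fiber coordinates $y_i \mapsto y_i + \sum_{j\neq i} x_j y_j$, a constant hyperplane, so its complement is a trivial $\A^{n-1}$-bundle. The only difference is packaging: the paper identifies the complement directly as $\A^{n-1}\times\A^{n-1}$ over the chart, while you first record $Z$ as a rank-$(n-1)$ projective subbundle and then cite proposition \ref{p:projBundleTheorem}, which is a perfectly fine (and arguably cleaner) way to organize the same argument.
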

\begin{proof}
 Taking $\A^{n-1}$ to be a coordinate open subscheme of $\P^{n-1}$ determined by e.g.~$x_0 \neq 0$, we obtain the
following pull-back diagram
\begin{equation*}
  \xymatrix{ U = \sProj_{\A^{n-1}}(\mathcal{O}_{\A^n}[y_0, \dots, y_{n-1}])\sm Z|_{\A^{n-1}} \ar[r] \ar[d] & \Pc^{n-1}
    \ar[d] \\ \A^{n-1} \ar[r] & \P^{n-1} }
\end{equation*}
The scheme $U$ is the complement of a hyperplane in $\P^{n-1} \times \dual \P^{n-1}$, and so takes the form 
\begin{equation*}
  U \isom \sSpec_{\A^{n-1}}(\mathcal{O}_{\A^n}[t_1, \dots, t_{n-1}]) \isom \A^{n-1} \times \A^{n-1}
\end{equation*}
The projection $U \to \A^{n-1}$ is a projection onto a factor. Since the coordinate open
subschemes $\A^{n-1}$ form an open cover of $\P^{n-1}$, it follows that $\Pc^{n-1} \weq \P^{n-1}$.
\end{proof}

In order to prove results concerning $\Pc^{n-1}$, it shall be useful to have the following definition
to hand. 
\begin{dfn}
  Let $R$ be a commutative $k$-algebra. By an \dfnnd{$n$-generated split line-bundle} we mean the
  following data. First, an isomorphism class of projective $R$-modules of rank $1$,  denoted $L$ by
  abuse of notation; second, a class of surjections $[f]:R^n \to L$, where two surjections are
  equivalent if they differ by a multiple of $R^\times$; third, a class of splitting maps $[g]: L \to
  R^n$, again the maps are considered up to action of $R^\times$, and where any $f' \in [f]$ is
  split by some $g' \in [g]$.
\end{dfn}

If $R \to S$ is a map of $k$-algebras, and if $(L, f,g)$ is an $n$-generated split line-bundle
over $R$, then application of $S \tensor_R \cdot$ yields the same over $S$. In this way, the
assignment to $R$ of the set of $n$-generated split line-bundles is a functor from the category
of $k$-algebras to the category of sets.

\begin{prop}
  If $R$ is a finite-type $k$-algebra, then the set of $R$-points $\Pc^{n-1}(R)$ is exactly the set
  of $n$-generated split line-bundles.
\end{prop}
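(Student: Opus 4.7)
The plan is to unpack both sides of the claimed bijection in terms of direct-sum decompositions $R^n = \widetilde{L} \oplus U$, where $\widetilde{L}$ is a rank-$1$ projective $R$-module and $U$ is rank-$(n-1)$, and to show that $\Pc^{n-1}(R)$ and the set of $n$-generated split line-bundles are each naturally in bijection with the set of such decompositions. By the standard Proj-theoretic description, $R$-points of $\P^{n-1}$ correspond to surjections $f\colon R^n \twoheadrightarrow L$ onto an invertible $R$-module $L$, up to the $R^\times$ action on $L$, and similarly $R$-points of $\dual{\P}^{n-1}$ correspond to surjections $f'\colon R^n \twoheadrightarrow L'$. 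The defining section $\sum x_i y_i$ on $\P^{n-1} \times \dual{\P}^{n-1}$ pulls back, under such a pair $(f, f')$, to the element $\sum_i f(e_i) \otimes f'(e_i) \in L \otimes L'$, and the complement of its vanishing locus is precisely where this element is a generator of $L \otimes L'$, i.e., gives a trivialization $L \otimes L' \isom R$, which in turn forces $L' \isom L^{-1}$.

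Given such a trivialization, I would dualize $f' \colon R^n \twoheadrightarrow L^{-1}$ to obtain a map $g \colon L \to R^n$, and a short computation identifies the normalization $\sum_i f(e_i) \otimes f'(e_i) = 1$ with the relation $f \circ g = \id_L$. The triple $(L, [f], [g])$ is then an $n$-generated split line-bundle. Conversely, any such triple produces an idempotent $gf \in \operatorname{End}_R(R^n)$, hence a decomposition $R^n = g(L) \oplus \ker f$, from which one reads off the rank-$1$ quotient (projection onto $g(L)$) and the rank-$(n-1)$ submodule $\ker f$, recovering a non-incident pair in $\P^{n-1}(R) \times \dual{\P}^{n-1}(R)$.

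The main obstacle is to verify that the two equivalence relations match up. On the $\Pc^{n-1}$ side, the pair $(f, f')$ is determined only up to the $R^\times$ actions on $L$ and $L'$ separately, constrained by the trivialization $\sum_i f(e_i) \otimes f'(e_i) = 1$; on the other side, $[f]$ and $[g]$ are each $R^\times$-classes subject to the weaker compatibility that \emph{some} representative of $[g]$ splits \emph{some} representative of $[f]$. Rescaling $f$ by $u \in R^\times$ forces $g$ to rescale by $u^{-1}$ to preserve $f \circ g = \id_L$, which precisely reflects the flexibility allowed in both definitions. Naturality in $R$ is automatic from the functorial nature of all the constructions involved, and the lemma follows.
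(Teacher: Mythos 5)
Your argument is correct and is essentially the paper's proof: both unpack $R$-points of $\P^{n-1}\times\dual{\P}^{n-1}$ as pairs of surjections onto invertible modules up to scalars, identify the complement of $Z$ with the condition that $\sum_i f(e_i)\otimes f'(e_i)$ is a nowhere-vanishing (hence generating) section of $L\otimes L'$, deduce $L'\isom \dual{L}$, and recover the splitting by dualizing $f'$ after rescaling so that $f\circ g=\id_L$. The only cosmetic differences are that you check generation directly at all primes rather than reducing to closed points via the finite-type hypothesis, and you spell out the converse via the idempotent $gf$, neither of which changes the substance.
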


\begin{proof}
  It is generally known that $\Spec R \to \P^{n-1}$ classifies isomorphism classes of rank-$1$
  vector bundles, $L$, over $R$, equipped with an equivalence class of surjections $R^n \to L$. This
  is a modification of a theorem of \cite[Chapter 4]{EGAII}. It follows that $\Spec R \to \P^{n-1}
  \times \dual \P^{n-1}$ classifies pairs of equivalence classes of rank-$1$ projective modules,
  equipped with surjective maps $(f:R^n \to L_1, g:\dual R^n \to L_2)$ considered up to scalar
  multiplication by $R^\times \times R^\times$.

  For convenience, let $\{e_1, \dots, e_n\}$ be a basis of $R^n$ and let $\{ \dual e_1, \dots, \dual
  e_n\}$ be the dual basis. Let $h: R \to R^n \tensor_R \dual R^n$ be the map given by the element
  $\sum_{i=1}^n e_i \tensor \dual e_i$ of the latter module. Given $f,g$, one obtains a composite map
  \begin{equation}
    \label{eq:778}
   \phi = (f \tensor g) \circ h :R \to L_1 \tensor_R L_2 \end{equation}
  Let $Z$ be the closed subscheme of $\P^{n-1} \times \dual{\P^{n-1}}$ determined by the equation
  $x_1y_1 + \dots + x_ny_n = 0$. Let $\m$ be a maximal ideal of $R$. Suppose that there is a dashed
  arrow making the following diagram commute:
  \begin{equation*}
    \xymatrix{ \Spec R/\m \ar[d] \ar@{-->}[r] & Z \ar[d] \\ \Spec R \ar[r] & \P^{n-1} \times
      \dual\P^{n-1} }
  \end{equation*}
  If we take the composite, $\Spec(R/\m) \to \Spec R \to \P^{n-1} \times \dual \P^{n-1}$, then this
  has the effect of reducing our represented maps modulo $\m$, and the result is two surjective maps
  over a field $\overline f: (R/\m)^n \to R/\m$ and $\overline g: (\dual R/ \m)^n \to \dual
  R/\m$. These can be identified with two $n$-tuples $[a_1; \dots ; a_{n-1}]$ and $[b_1; \dots ;
  b_{n-1}]$ one in $(R/\m)^n$, the other in its dual, taken up to multiplication by
  $(R/\m)^\times$. The map $\Spec R/\m \to \P^{n-1} \times \dual \P^{n-1}$ factors
  through $Z$ if and only if $a_1b_1 + \dots + a_n b_n = 0$, but this latter equation is precisely
  the statement that the reduction of the
  map $\phi$ of equation \eqref{eq:778} above, $(\overline f \tensor \overline g)  \cdot \overline h$, is nonzero.

  Since $R$ is a finite-type $k$-algebra, one has a factorization $\Spec R \to \Pc^{n-1} = (\P^{n-1}
  \times \dual \P^{n-1} )\sm Z \to \P^{n-1} \times \dual \P^{n-1}$ if and only if no closed point of
  $\Spec R$ lies in the closed subset $Z$ of $\P^{n-1} \times \dual \P^{n-1}$, but this is
  equivalent to the statement that no matter which maximal ideal $\m \subset R$ is chosen, $\Spec
  R/\m \to \P^{n-1} \times \dual \P^{n-1}$  does not factor through $Z$, and therefore to the statement that the reduction of $\phi$ at any maximal ideal of
  $R$ is never $0$. It follows that a map $\Spec R \to \P^{n-1} \times \dual \P^{n-1}$ is the data
  of two equivalence classes of line bundles, $L_1, L_2$, each equipped with surjective maps $f: R^n
  \to L_1$, $g: R^n \to L_2$, considered only up to scalar multiple, and where there is a
  nowhere-vanishing map of modules $\phi: R^1 \to L_1 \tensor_R L_2$. 

  Such a nowhere-vanishing map must be an isomorphism, $R^1 \isom L_1 \tensor_R L_2$, and we
  therefore have an identifying isomorphism $L_2 = \dual L_1$. Write $(a_1, \dots, a_n)$ for the
  image of the map $f: R^n \to L_1$ and $(b_1, \dots, b_n)$ for that of $g: \dual R^n \to \dual
  L_1$, then the identifying isomorphism has been constructed specifically so that the element
  $a_1b_1 + \dots + a_nb_n \in L_1 \tensor \dual L_1$ corresponds exactly to a generator of
  $R^1$, to wit.~a unit $u \in R$. Since we are working only with equivalence-classes of presentations of $L_1$, $L_2$, we may
  if need be replace $L_1$ by $u^{-1}L_1$, and so we find that the composite
  \begin{equation*}
    \xymatrix{ L_1 \ar^{\dual g}[r] & R^n \ar^f[r] & L_1 }
  \end{equation*}
  is the identity, as required.
\end{proof}

\begin{prop}
For all $n \in \N$, let $X_n$ denote the motivic space
\begin{equation} \alabel{e:decompOfXn}
  X_n = \G_m \wedge (\Pc^{n-1}_+) 
\end{equation}
Then there are maps $f_n:X_n \to \Gl(n)$, and maps $\overline{h} : X_n \to X_{n+1}$, which make the following diagram commute
\begin{equation} \alabel{e:CommutingForXn}
  \xymatrix{  \G_m \wedge \P^{n-1}_+ \ar^{\id\wedge h_+}[d] & \ar[l] X_n \ar^{\overline{h}}@{-->}[d]\ar[r] & \Gl(n) \ar^\phi[d] \\ \G_m
    \wedge \P^n_+ & \ar[l] X_{n+1} \ar[r] & \Gl(n+1) }
\end{equation}
Where the maps $h: \P^{n-1} \to \P^n$ are the standard inclusion of $\P^{n-1} \to \P^n$ as the first
$n-1$ coordinates.
\end{prop}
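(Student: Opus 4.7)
The plan is to construct $f_n$ on $R$-points using Yoneda (lemma~\ref{l:Yoneda}), and define $\overline{h}$ from an obvious extension on the $\Pc$ factor.

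\textbf{Construction of $f_n$.} A point of $\Pc^{n-1}(R)$ is a decomposition $R^n = L \oplus U$ with $L$ a projective rank-$1$ summand and $U$ its complement; this gives a canonical projection $\pi_{L,U}\colon R^n \to L \subset R^n$. A point of $\G_m(R)$ is a unit $\lambda \in R^\times$. Define
\[
f_n\colon \G_m(R) \times \Pc^{n-1}(R) \to \Gl(n,R), \qquad (\lambda, (L,U)) \mapsto \id_{R^n} + (\lambda - 1)\,\pi_{L,U}.
\]
This matrix is invertible with inverse $\id + (\lambda^{-1} - 1)\pi_{L,U}$, so it lies in $\Gl(n,R)$. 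When $\lambda = 1$, the map is $\id_{R^n}$, which is the basepoint of $\Gl(n)$, so the map is pointed in the $\G_m$-variable and hence descends to $\G_m \wedge \Pc^{n-1}_+ \to \Gl(n)$. Naturality in $R$ is automatic from the definition, so lemma~\ref{l:Yoneda} produces the desired morphism of (pointed) schemes $f_n\colon X_n \to \Gl(n)$.

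\textbf{Construction of $\overline{h}$.} Using the standard inclusion $R^n \hookrightarrow R^{n+1}$ via the first $n$ coordinates, define
\[
\tilde{h}\colon \Pc^{n-1}(R) \to \Pc^n(R), \qquad (L,U) \mapsto (L,\, U \oplus R\cdot e_{n+1}).
\]
Since $L \cap (U \oplus Re_{n+1}) = 0$ inside $R^{n+1}$, this is well-defined, and is natural in $R$, hence gives a map of schemes. Set $\overline{h} = \id_{\G_m} \wedge \tilde{h}_+$.

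\textbf{Commutativity.} Both squares are checked on $R$-points. For the left square, projecting $(L,U \oplus Re_{n+1}) \mapsto L$ and then including $L \subset R^n \subset R^{n+1}$ equals first projecting $(L,U) \mapsto L$ and then applying $h$, so it commutes. For the right square, under the block inclusion $\phi\colon A \mapsto \bigl(\begin{smallmatrix} A & 0 \\ 0 & 1\end{smallmatrix}\bigr)$ the element $\id_{R^n} + (\lambda-1)\pi_{L,U}$ becomes $\id_{R^{n+1}} + (\lambda-1)\tilde\pi_{L,U}$, where $\tilde\pi_{L,U}\colon R^{n+1} \to L$ is the projection vanishing on $U \oplus Re_{n+1}$; this is exactly $f_{n+1}(\lambda,\tilde h(L,U))$.

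\textbf{Main obstacle.} The only conceptual subtlety is the need to carry the choice of complementary hyperplane $U$: on $\P^{n-1}$ alone there is no canonical projection $R^n \to L$, which is precisely why we replaced $\P^{n-1}$ by the $\A^{n-1}$-bundle $\Pc^{n-1}$ in the first place. Once the splitting data is present, the formula $\id + (\lambda-1)\pi_{L,U}$ is completely rigid and the remaining verifications are direct.
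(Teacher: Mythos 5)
Your proposal is correct and follows essentially the same route as the paper: both construct $f_n$ functorially on $R$-points using the splitting data carried by $\Pc^{n-1}$ (your $\id + (\lambda-1)\pi_{L,U}$ is exactly the paper's automorphism $\Phi_\lambda$ acting by $\lambda$ on $L$ and identically on the complement), check that $\lambda=1$ gives the identity so the map descends to the half-smash, define $\overline{h}$ by trivially extending the splitting to $R^{n+1}$, and verify commutativity on points.
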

\begin{proof}
  First we construct a map $\overline{f}_n : \G_m \times X_n \to \Gl(n)$. The first scheme represents the functor taking
  a finite-type $k$-algebra, $R$, to the set of elements of the form $(\lambda, (L, \phi, \psi))$,
  that is to say, consisting of elements $\lambda \in
  R^\times$ and a surjection onto a rank-1 projective bundle $\phi:R^n
  \to L$ along with a splitting $\psi: L \to R^n$. The second scheme, $\Gl(n)$, represents the functor taking $R$ to
  $\Gl_n(R)$. The map $f_n$ can be constructed therefore as a natural transformation between functors. We set up such a
  transformation as follows: the data $\phi$, $\psi$ amount to an isomorphism 
  \begin{equation*}
    \Phi: \ker \phi \oplus L \isomto R^n
  \end{equation*}
  We can define a map 
  \begin{equation*}
    \xymatrix{ \Phi_\lambda:  R^n \isom \ker \phi \oplus L \ar^{(\id, \lambda)}[rr] & & \ker \phi
      \oplus L \isom R^n }
  \end{equation*}
 it has inverse
  $(\id, \lambda^{-1})$, so it is an automorphism, and consequently an element of $\Gl_n(R)$. The transformation taking $(\lambda, (L, \phi, \psi))$ to $\Phi_\lambda$ is natural, and so, by Yoneda's lemma, is a
  map of schemes.

  The motivic space $\G_m \wedge \Pc^{n-1}_+$ is the sheaf-theoretic quotient of the inclusion of sheaves of simplicial
  sets $1 \times \Pc^{n-1} \to \G_m \times \Pc^{n-1}$, in particular, $\overline{f}_n$ will descend to a map $f_n : \G_m
  \wedge \Pc^{n-1}_+$ if and only if the composite $1 \times \Pc^{n-1} \to \G_m \times \Pc^{n-1} \to \Gl(n)$ is
  contraction to a point. On the level of functors, however, the first scheme represents the functor taking $R$ to pairs
  $(1, (L, \phi, \psi))$, and it is immediate that $\Phi_1 = \id$, so that the composite is indeed the constant map at
  the identity of $\Gl(n)$. We have furnished therefore the requisite map $\G_m \wedge \Pc^{n-1}_+$. 

  As for the commutativity of the diagram, we recall that the standard inclusion $\P^{n-1} \to \P^n$ represents the
  natural transformation taking a surjection such as $R^N \to L$ to the trivial extension $R^n \oplus R \to R^n \to
  L$. We lift this idea to $\Pc^{n-1}$, given a triple $(L, \phi, \psi) \in \Pc^{n-1}(R)$, one can extend the split maps
  $\phi, \psi$ to maps $\overline \phi$ and $\overline \psi$, where $\overline \phi : R^{n+1} \to L$ and
  $\overline{\phi}$ splits $\overline{\phi}$, simply by adding a trivial summand to $R^n$. This furnishes a natural
  transformation of functors, or a map of schemes, $\Pc^{n-1} \to \Pc^n$, that makes the diagram
  \begin{equation*}
    \xymatrix{ \P^{n-1} \ar^h[d] \ar[r] & \Pc^{n-1} \ar[d] \\ \P^n \ar[r] & \Pc^n }
  \end{equation*}
  commute. One may define $\overline{h} : X_n \to X_{n+1}$ in diagram \eqref{e:CommutingForXn} as the map given by
  application of $\G_m \wedge ( \cdot )_+$ to the map $\Pc^{n-1} \to \Pc^n$ immediately constructed.

  The map $\phi: \Gl(n) \to \Gl(n+1)$ is obtained as a natural transformation by taking $A \in \Gl_n(R)$ and
  constructing $A \oplus \id: R^n \oplus R \to R^n \oplus R$. It is now routine to verify that diagram
  \eqref{e:CommutingForXn} commutes.
  
\end{proof}

The construction $\G_m \wedge X$ is denoted by $\Sigma_t^1 X$ and is called the Tate suspension,
\cite{VREDPOWER}. We have
\begin{equation*}
  H^{*,*}(\G_m;R) \isom \frac{\M[\sigma]}{\sigma^2 - \{-1\}\sigma}
\end{equation*}
the relation being derived in loc.~cit. It is easily seen that as rings, we have
\begin{equation*} 
H^{*,*}(\G_m \times X; R) \isom H^{*,*}(X;R) \tensor_\M \frac{\M[\sigma]}{(\sigma^2 - \{-1\}\sigma)}
\end{equation*}
and that $\tilde{H}(\Sigma_t^1 X;R)$ is the split submodule (ideal) generated by $\sigma$, leading to
a peculiar feature of the Tate suspension
\begin{prop}
  Suppose $x, y \in \tilde H^{*,*}(X;R)$, and that $\sigma x, \sigma y$ are their isomorphic images
  in $\tilde H^{*,*}(\Sigma_t^1X;R)$. Then $(\sigma x)(\sigma y) = \{-1\}\sigma (xy)$.
\end{prop}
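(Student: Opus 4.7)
The plan is to identify $\tilde H^{*,*}(\Sigma_t^1 X; R)$ as the principal ideal generated by $\sigma$ inside the already-computed ring $H^{*,*}(\G_m \times X; R) \isom H^{*,*}(X;R)\otimes_\M \M[\sigma]/(\sigma^2 - \{-1\}\sigma)$, and then simply multiply. Under this identification the element written as $\sigma x$ corresponds to $\sigma \otimes x$ (and similarly for $\sigma y$), so the product $(\sigma x)(\sigma y)$ is to be evaluated in the tensor-product ring, where the K\"unneth sign convention yields
\begin{equation*}
 (\sigma \otimes x)(\sigma \otimes y) \;=\; (-1)^{|x|}\, \sigma^2 \otimes (xy).
\end{equation*}
Here $|x|$ denotes the first component of the bigrading of $x$; the sign comes from graded-commutativity in that grading, which must be used to move the second copy of $\sigma \in H^{1,1}$ past $x$.

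Next I would apply the defining relation $\sigma^2 = \{-1\}\sigma$ in $H^{*,*}(\G_m;R)$ to rewrite this as
\begin{equation*}
 (\sigma x)(\sigma y) \;=\; (-1)^{|x|}\, \{-1\}\sigma\,(xy).
\end{equation*}
The only remaining issue is the unwanted sign $(-1)^{|x|}$, which would spoil the formula whenever $x$ has odd first-degree. This is exactly where the arithmetic of the class $\{-1\}$ saves the day: since $\{-1\}$ is the image of $-1$ under the isomorphism $k^\times \isom H^{1,1}(\Spec k; \Z)$ and the group operation on $k^\times$ corresponds to addition in $H^{1,1}$, the identity $(-1)(-1) = 1$ forces $2\{-1\} = 0$, and this persists after base change to any coefficient ring $R$. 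Thus $\{-1\} = -\{-1\}$ in $\M_R$, so the prefactor $(-1)^{|x|}$ is absorbed and the two sides agree.

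The main (and really only) obstacle is being careful with the graded-commutativity sign and the K\"unneth sign in the Tate direction, since the statement superficially looks as though no sign should appear at all. Once one tracks the sign honestly and recognises that $2\{-1\}=0$ makes it invisible, the computation is essentially formal.
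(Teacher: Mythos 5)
Your proof is correct and takes essentially the same route the paper intends: identify $\tilde H^{*,*}(\Sigma_t^1 X;R)$ with the ideal generated by $\sigma$ inside $H^{*,*}(X;R)\otimes_\M \M[\sigma]/(\sigma^2-\{-1\}\sigma)$ and multiply there using the relation $\sigma^2=\{-1\}\sigma$. Your explicit tracking of the Koszul sign $(-1)^{|x|}$ and its absorption via $2\{-1\}=0$ (from $(-1)\cdot(-1)=1$ in $k^\times\isom H^{1,1}(\Spec k;\Z)$) is a detail the paper leaves implicit, and you handle it correctly.
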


We now come to the main theorem of this paper

\begin{theorem} \alabel{t:MainComp}
  The map $f_n$ induces an isomorphism on cohomology 
\begin{equation*}
H^{2j-1,j}(\Gl(n);R) \to H^{2j-1,j}(\Sigma_t^1 \Pc^{n-1}_+) = H^{2j-1,j}(\Sigma_t^1\P^{n-1}_+)
\end{equation*}
 in dimensions $(2j-1, j)$ where $j\ge 1$.
\end{theorem}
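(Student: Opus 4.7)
The plan is to compute both cohomology groups in the relevant bidegrees, reduce by induction on $n$ to the top class $\rho_n$, and handle that case by an explicit higher Chow computation.

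\emph{Computing both sides.} By Proposition~\ref{p:CohW1}, $H^{*,*}(\Gl(n);R)$ is generated as an $\M_R$-module by the monomials $\rho_{i_1}\cdots\rho_{i_k}$ with $i_1<\cdots<i_k$, of bidegree $(2\sum i_l - k,\ \sum i_l)$. Combining this with the vanishing $\M_R^{p,q}=0$ for $p>q$, the only contribution to bidegree $(2j-1,j)$ is $R\cdot\rho_j$ (for $1\leq j\leq n$). An entirely analogous analysis, using $H^{*,*}(\P^{n-1}) = \M_R[t]/(t^n)$ together with the Tate suspension formula $\tilde H^{*,*}(\Sigma_t^1 Y) = \sigma\cdot\tilde H^{*-1,*-1}(Y)$, yields $\tilde H^{2j-1,j}(\Sigma_t^1\P^{n-1}_+;R) = R\cdot\sigma t^{j-1}$ in the same range.

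\emph{Reduction to the top class.} Writing $f_n^*(\rho_j) = c_{j,n}\,\sigma t^{j-1}$, I would use diagram~\eqref{e:CommutingForXn} to show these coefficients stabilize. The map $\phi:\Gl(n)\to\Gl(n+1)$, $A\mapsto\diag{A,1}$, differs from the closed immersion $\phi_{e_{n+1},\langle e_1,\ldots,e_n\rangle}$ of Proposition~\ref{p:GLNsurj} by a permutation of columns, which is trivial on cohomology by Proposition~\ref{p:permutInvGln}; hence $\phi^*(\rho_j) = \rho_j$ for $j\leq n$. The map $\bar h$ is induced by $\Pc^{n-1}\to\Pc^n$ which, under the $\A^1$-equivalence $\Pc^r \weq \P^r$, becomes the standard inclusion $\P^{n-1}\hookrightarrow\P^n$, so $\bar h^*(\sigma(t')^{j-1}) = \sigma t^{j-1}$ for $j\leq n$. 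The identity $f_n^*\phi^* = \bar h^* f_{n+1}^*$ then forces $c_{j,n} = c_{j,n+1}$ for $j\leq n$; thus $c_j := c_{j,n}$ depends only on $j$ (once $n\geq j$), and the theorem reduces to showing each $c_n$ is a unit in $R$.

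\emph{The top class.} The base $n=1$ is immediate: $X_1 = \G_m = \Gl(1)$, $f_1=\id$, and so $c_1=1$. For general $n$, compose with the column-forgetting projection $\Gl(n) = W(n,n) \to W(n,1) = \A^n\sm\{0\}$, which sends $\rho_n\mapsto\rho_n$ by Proposition~\ref{p:Wincl}. The composite $g_n: X_n \to \A^n\sm\{0\}$ is given on $R$-points by $(\lambda,L,\phi,\psi) \mapsto e_1 + (\lambda-1)\,\psi\phi(e_1)$. I would pull back the explicit generating curve $\gamma_{e_1}$ of Corollary~\ref{p:curvetoothpaste}: the equations $(\lambda-1)v_i = 0$ for $i\geq 2$ (where $v = \psi\phi(e_1) \in L$) force either $\lambda=1$ (which is ruled out by the first-coordinate equation) or $L = \langle e_1\rangle$; on this locus $v = e_1$ and the remaining equation gives $\lambda = (t-1)/t$. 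The preimage cycle $\{L = \langle e_1\rangle\}\times\{\lambda = (t-1)/t\}$ in $X_n\times\Delta^1$, viewed via the $\A^{n-1}$-fibration $\Pc^{n-1}\to\P^{n-1}$, is precisely the product representative of $\sigma\otimes t^{n-1} = \sigma t^{n-1}$, giving $c_n = \pm 1$. The main technical obstacle lies here: eliminating extraneous components in the preimage (via the observation that $\psi\phi(e_1)\in L$ is forced to be a scalar multiple of $e_1$) and identifying the resulting cycle with the product representative in higher Chow groups.
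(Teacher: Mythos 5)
Your proposal is correct and follows essentially the same route as the paper: identify the groups $H^{2j-1,j}$ as rank one, reduce via the stabilization diagram \eqref{e:CommutingForXn} (using propositions \ref{p:GLNsurj} and \ref{p:permutInvGln}) to the top class, and then compute $f_n^*(\rho_n)$ by composing with the first-column projection $\Gl(n)\to\A^n\sm\{0\}$ and pulling back the explicit generating curve of corollary \ref{p:curvetoothpaste} through the functor-of-points description of $\Pc^{n-1}$, identifying the preimage with the product cycle representing $\sigma\eta^{n-1}$. The only cosmetic difference is that you pin the coefficient down only up to sign, which suffices for the isomorphism asserted here (though the paper's sharper identity $f_n^*(\rho_i)=\sigma\eta^{i-1}$ is what gets reused later).
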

\begin{proof}
  We first remark that when $j$ is large, $j>n$, 
\begin{equation*}
  H^{2j-1,j}(\Gl(n);R) = H^{2j-1,j}(\Sigma_t^1  \P^{n-1};R) = 0
\end{equation*}
 so the result holds trivially in this range. We restrict to the case $1 \le j
  \le n$.

  The following are known:
  \begin{align*} H^{*,*}(\Gl_n) & = \M[\rho_n, \dots, \rho_1]/I \quad |\rho_i|=(2i-1,i)\\
    \tilde H^{*,*}(\Sigma_t^1 \Pc^{n-1}_+) = \tilde H^{*,*}(\Sigma_t^1\P^{n-1}_+)  &=
  \bigoplus_{i=0}^{n-1} \sigma\eta^i \M \quad |\sigma| = (1,1), \quad |\eta| = (2,1)\end{align*}

It suffices to show that $f_n^*(\rho_i) = \sigma\eta^{i-1}$. Since $f_n^*(\rho_i) = \tilde
f_n^*(\rho_i)$, we may prove this for the map $\tilde f_n : \G_m \times \Pc^{n-1}$, which has the benefit of being more explicitly
geometric.
  
We prove this by induction on $n$. In the case $n=1$, the space $\Pc^{n-1}$ is trivial, and $X_n = \G_m = \Gl(n)$. The
map $f_1$ is the identity map, so the result holds in this case.

  There is a diagram of varieties
\begin{equation}
  \xymatrix{  \G_m \wedge \P^{n-1}_+ \ar^{\id\wedge h_+}[d] & \ar[l] X_n \ar^{\overline{h}}[d]\ar[r] & \Gl(n) \ar^\phi[d] \\ \G_m
    \wedge \P^n_+ & \ar[l] X_{n+1} \ar[r] & \Gl(n+1) }
\end{equation}
  as previously constructed. We understand the vertical map on the left since we can rely on the theory of
  ordinary Chow groups, \cite[chapter 1]{FULTON}, we know that the induced map $H^{2j,j}(\P^{n-1})
  \to H^{2j,j}(\P^{n-2})$ is an isomorphism for $j<n-1$, and so $i^*$ is an isomorphism
  $H^{2j-1,j}(\G_m \times \Pc^{n-1}) \isom H^{2j-1,j}(\G_m\times \Pc^{n-2})$ for $j<n$.

  The maps $\phi_i^*$ are also isomorphisms in this range, by proposition \ref{p:GLNsurj} and its
  corollary, so the diagram implies that the result holds except possibly for $f_n^*(\rho_n)$.

  \medskip

  The argument we use to prove $f_n^*(\rho_n) = \sigma \eta^{n-1}$ is based on the composition
  \begin{equation*}
    \xymatrix{ \G_m \times \Pc^{n-1} \ar[r] \ar^g@/^2ex/[rr] & \Gl(n) \ar_{\pi}[r] & \A^n\sm\{0\}}
  \end{equation*}
  where the map $\pi$ is projection on the first column. We write $g$ for the composition of the two
  maps. Since $H^{*,*}(\A^n\sm\{0\}) \isom \M [\iota]/(\iota^2)$, with $|\iota| = (2n-1,n)$, and
  $\pi^*(\iota) = \rho_n$, it suffices to prove that $g^*(\iota) = \sigma \eta^{n-1}$.

  For the sake of carrying out computations, it is helpful to identify motivic cohomology and higher
  Chow groups, e.g.~identify $H^{2n-1,n}(\A^n\sm\{0\})$ and $CH^n(\A^n\sm\{0\},1)$. We can write down an
  explicit generator for $CH^n(\A^n\sm\{0\},1)$, see corollary \ref{p:curvetoothpaste}, for instance the
  curve $\gamma$ in $\A^n\sm\{0\} \times \Delta^1$ given by $t(x_1-1)=-1$, $x_2=x_3=\dots=x_n= 0$. We can
  also write down an explicit generator, $\mu$, for a class $\sigma \eta_0^{n-1} \in \G_m \times \P^{n-1}$,
  writing $x$ and $a_0, \dots, a_{n-1}$ for the coordinates on each, and $t$ for the coordinate
  function on $\Delta^1$, $\sigma \eta^{n-1}$ is explicitly represented by the product of the
  subvariety of $\P^{n-1}$ given by $a_0=1,a_2 = 0 \dots, a_{n-1}=0$, which represents $\eta^{n-1}$,
  with the cycle given by $t(x-1) = -1$ on $\G_m \times \Delta^1$.

  We have therefore a closed subvariety, $\gamma$, in $\A^n \sm \{0\} \times \Delta^1$, and another closed subvariety
  $\mu \in \G_m \times \P^{n-1}$, representing the cohomology classes we wish to relate to one
  another. We shall show that the pull-backs of each to $\G_m \times \Pc^{n-1} \times \Delta^1$ coincide. For this, it shall again be
  advantageous to take the functorial point of view.

  The variety $\A^n \sm \{0\} \times \Delta^1$ represents, when applied to a $k$-algebra $R$, unimodular columns $(x_1,
  \dots, x_n)^t \in R^n$ of height $n$, along with a parameter $t \in R$. The subvariety $\gamma$ represents the
  unimodular columns and parameters for which $t(x_1-1) = -1$ and $x_i = 0$ for $i > 1$. Note that for such pairs, we
  have $t \in R^\times$ and $t \neq 1$ (since otherwise $x_1 =0$).

  The pull-back of $\gamma$ to $\Gl(n) \times \Delta^1$ represents pairs $(A, t)$, where $A \in \Gl_n(R)$ and $t \in R$,
  satisfying $(A(1, 0, \dots, 0)^t, t) \in \gamma(R)$. We observe that, writing $e_1 = (1, 0, \dots, 0)^t$, this implies
  $Ae_1 = (1 - t^{-1})e_1$. Note further that $1- t^{-1} \neq 1$.

  Pulling $\gamma$ back a second time, to $\G_m \times \Pc^{n-1} \times \Delta^1$, we obtain the set of triples
  $(\lambda,  (L, \phi, \psi), t)$, where $\lambda \in R^\times$, $(L, \phi, \psi)$ is a split $n$-generated line
  bundle, and $t$ is a parameter, and where the invertible linear transformation $\Phi_\lambda : \ker \phi \oplus L \to
  \ker \phi \oplus L$ along with the parameter $t$ lies in the pull-back of $\gamma(R)$ to $\Gl_n(R)$. Decomposing $e_1
  = v+ w$, where $v \in \ker \phi$ and $w \in L$, we see that $\Phi_L(e_1) = v + \lambda w = (1-t^{-1})e_1 =
  (1-t^{-1})(v +w)$, which by uniqueness of the decomposition, forces $\lambda = (1-t^{-1})$, and $v=0$. Consequently,
  $L$ is the rank-$1$ split subbundle of $R^n$ generated by $e_1$, and we have $t (\lambda - 1) =  - 1$.

  On the other hand, the variety $\mu \subset \G_m \times \P^{n-1} \times \Delta^1$ represents triples $(\lambda, L, t)
  \in R^\times \times \P^{n-1}(R) \times R$, where $L$ is exactly the rank-$1$ free module generated by $e_1$, and where
  $t(\lambda -1) = -1$. The pull-back of $\mu$ to $\G_m \times \Pc^{n-1} \times \Delta^1$ coincides
  with that of $\gamma$, as claimed.
 \end{proof}

We are now in a position to pay off at last the debt we owe regarding the product structure of $H^{*,*}(W(n,m);R)$.
\begin{thm} \alabel{th:mainch1}
   The cohomology of $W(n,m)$ has the following presentation as a graded-commutative $\M_R$-algebra:
  \begin{equation*}
    H^{*,*}(W(n,m);R) = \frac{\M_R[\rho_n,\dots, \rho_{n-m+1}]}{I} \qquad |\rho_i| = (2i-1,i)
  \end{equation*}
  The ideal $I$ is generated by relations $\rho_i^2-\{-1\}\rho_{2i-1}$, where $\{-1\} \in
  \M_R^{1,1}$ is the image of $-1 \in k^* = \M_\Z$ under the map $\M_\Z \to \M_R$.
\end{thm}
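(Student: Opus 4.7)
The plan is to leverage Proposition \ref{p:CohW1} together with Theorem \ref{t:MainComp} to identify the otherwise-unspecified coefficients $a_{n,i} \in \M_R^{1,1}$ with $\{-1\}$. The additive structure and the polynomial-with-relations presentation are already in hand; only the specific value of $a_{n,i}$ is at issue. Proposition \ref{p:Wincl} shows that the projection $\Gl(n) = W(n,n) \to W(n,m)$ induces an inclusion $H^{*,*}(W(n,m)) \hookrightarrow H^{*,*}(\Gl(n))$ preserving each generator $\rho_j$, so the coefficient $a_{n,i}$ is the same in any $W(n,m)$ in which both $\rho_i$ and $\rho_{2i-1}$ appear; it therefore suffices to carry out the identification in $\Gl(n)$.

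In $\Gl(n)$ I would apply the comparison map $f_n : \G_m \wedge \P^{n-1}_+ \to \Gl(n)$. Since $f_n$ is a map of pointed spaces, $f_n^*$ is a ring homomorphism on reduced motivic cohomology, and by Theorem \ref{t:MainComp} it sends $\rho_j$ to $\sigma \eta^{j-1}$ for $1 \le j \le n$. Applying $f_n^*$ to the relation $\rho_i^2 = a_{n,i}\rho_{2i-1}$, valid in $\Gl(n)$ whenever $2i-1 \le n$, produces $(\sigma \eta^{i-1})^2 = a_{n,i}\, \sigma \eta^{2i-2}$ inside $\tilde H^{*,*}(\Sigma_t^1 \P^{n-1}_+)$. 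Independently, the Tate-suspension multiplication formula from the excerpt, applied with $x = y = \eta^{i-1}$, computes the same square as $\{-1\}\, \sigma \eta^{2i-2}$; the first degree of $\eta^{i-1}$ is even, so no additional sign arises.

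Equating the two expressions yields $(a_{n,i} - \{-1\})\, \sigma \eta^{2i-2} = 0$ in $\tilde H^{*,*}(\Sigma_t^1 \P^{n-1}_+) = \bigoplus_{j=0}^{n-1} \sigma \eta^j\, \M$. Since $\sigma \eta^{2i-2}$ is one of the free $\M$-module generators of this direct sum whenever $2i-1 \le n$, multiplication by it is injective on $\M$, and we conclude $a_{n,i} = \{-1\}$. The remaining case $2i-1 > n$ is already settled by Proposition \ref{p:CohW1} itself, which in this grading-forbidden range yields $\rho_i^2 = 0$; the asserted relation $\rho_i^2 = \{-1\}\rho_{2i-1}$ remains consistent under the convention that $\rho_{2i-1}$ is interpreted as zero when it is not a listed generator. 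The main obstacle in the argument is not this final computation but the preceding Theorem \ref{t:MainComp}, whose proof requires explicit higher-Chow representatives; once that theorem and the Tate-suspension product formula are taken as input, the present statement follows by a brief bidegree and multiplicativity chase.
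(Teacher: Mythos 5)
Your proposal is correct and follows essentially the same route as the paper: reduce to $\Gl(n)$ via Proposition \ref{p:Wincl}, apply $f_n^*$ using Theorem \ref{t:MainComp}, and use the Tate-suspension relation $\sigma^2 = \{-1\}\sigma$ to read off the coefficient, with the freeness of $\tilde H^{*,*}(\Sigma_t^1\P^{n-1}_+)$ over $\M$ pinning down $a_{n,i}=\{-1\}$. Your write-up is in fact slightly more explicit than the paper's (which only phrases the conclusion as nontriviality of $\rho_i^2$) and correctly disposes of the range $2i-1>n$, but there is no substantive difference in method.
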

\begin{proof}
  It suffices to deal with the case $R= \Z$. It suffices also to consider only the case $m=n$,
  since we can use the inclusion $H^{*,*}(W(n,m)) \subset H^{*,*}(\Gl(n))$ to deduce it for all $n,m$.

  We have proved everything already in proposition \ref{p:CohW1}, except that in the relation
  $\rho_i^2 - a\rho_{2i-1}$, we were unable to show $a$ was nontrivial. We consider the map $\G_m
  \times \Pc^{n-1}_+ \to \Gl(n)$, which induces a map of rings on cohomology. In the induced map, we
  have $\rho_i \mapsto \sigma \eta^{i-1}$, and so $\rho_i^2 \mapsto -\sigma^2\eta^{2i-2} =
  \{-1\}\sigma \eta^{2i-2}$. Since this is nontrivial if $2i-2 \le n-1$, it follows that $\rho_i^2$
  is similarly nontrivial.
\end{proof}

In the case $n=m$ this result, although computed by a different method, appears in  \cite{PUSHIN}.

We can compute the action of the reduced power operations of \cite{VREDPOWER} on the cohomology
$H^{*,*}(W(n,m);\Z/p)$ by means of the comparison theorem.

\begin{theorem} \alabel{c:SqOnW} Suppose the ground-field $k$ has characteristic different from $2$.
  Represent $H^{*,*}(W(n,m); \Z/2)$ as $\M_2[\rho_n,\dots,\rho_{n-k+1}]/I$. The even motivic
  Steenrod squares act as
  \begin{equation*}
    \Sq^{2i}(\rho_j) =  \begin{cases}\binom{j-1}{i} \rho_{j+i}\quad \text{ if $i+j \leq
        n$} \\ 0 \quad \text{otherwise} \end{cases}
  \end{equation*}
  The odd squares vanish for dimensional reasons.
\end{theorem}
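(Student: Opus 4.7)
The plan is to reduce to a computation on $\Gl(n)$ and then to transfer the Steenrod squares to $\Sigma_t^1 \P^{n-1}_+$ via the comparison map $f_n$ of theorem \ref{t:MainComp}. Since the inclusion $W(n,m) \hookrightarrow W(n,n) = \Gl(n)$ of proposition \ref{p:Wincl} is natural with respect to Steenrod operations and embeds $H^{*,*}(W(n,m); \Z/2)$ as the $\M_2$-subalgebra generated by $\rho_n, \dots, \rho_{n-m+1}$, it suffices to prove the formula in $H^{*,*}(\Gl(n); \Z/2)$.

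The first move is a purely bidegree-theoretic one. From the presentation of theorem \ref{th:mainch1} a monomial $\rho_{a_1}\cdots \rho_{a_k}$ has bidegree $(2q-k, q)$ with $q = \sum a_i$, so in bidegree $(2(j+i)-1,\, j+i)$ the monomial part must have $k = 1$, and the remaining $\M_2$-coefficient lies in $\M_2^{2q', q'}$ for some $q' \geq 0$. By the standing vanishing $\M_2^{p,q} = 0$ for $p > q$ this forces $q' = 0$, so
\begin{equation*}
H^{2(j+i)-1,\, j+i}(\Gl(n); \Z/2) = \begin{cases} \Z/2 \cdot \rho_{j+i} & \text{if } 1 \leq j+i \leq n, \\ 0 & \text{otherwise.}\end{cases}
\end{equation*}
In particular $\Sq^{2i}(\rho_j) = 0$ automatically when $j + i > n$. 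An identical analysis in bidegree $(2j+2i,\, j+i)$ yields $\Sq^{2i+1}(\rho_j) = 0$ for dimensional reasons, establishing the odd-square clause.

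For the remaining range $1 \leq j+i \leq n$, theorem \ref{t:MainComp} supplies an isomorphism $f_n^*$ between the relevant bidegree pieces, carrying $\rho_{j+i}$ to $\sigma \eta^{j+i-1}$. By naturality and stability of $\Sq^{2i}$, which lets it commute with the Tate-suspension isomorphism $\sigma \cdot (-)$, we have
\begin{equation*}
f_n^*(\Sq^{2i}(\rho_j)) = \Sq^{2i}(\sigma \eta^{j-1}) = \sigma \cdot \Sq^{2i}(\eta^{j-1}).
\end{equation*}
The computation on $\P^{n-1}$ is the usual one: starting from $\Sq^0 = \id$, $\Sq^2(\eta) = \eta^2$ (because $\eta \in H^{2,1}$ and $\Sq^{2q}$ is the Frobenius on weight-$q$ classes in degree $2q$), and $\Sq^{2i}(\eta) = 0$ for $i \geq 2$ by instability, all from \cite{VREDPOWER}, the Cartan formula produces $\Sq^{2i}(\eta^{j-1}) = \binom{j-1}{i}\eta^{j+i-1}$. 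Transporting back via $f_n^*$ yields $\Sq^{2i}(\rho_j) = \binom{j-1}{i}\rho_{j+i}$. The main delicate point is invoking the stability of the motivic Steenrod squares with respect to the Tate suspension cleanly; this rests on their representability by a motivic spectrum and should be a straightforward appeal to \cite{VREDPOWER}.
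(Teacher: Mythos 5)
Your proposal is correct and follows essentially the same route as the paper: reduce to $\Gl(n)$ via proposition \ref{p:Wincl}, transport the squares across the comparison isomorphism $f_n^*$ of theorem \ref{t:MainComp} using stability under the Tate suspension, and compute on $\P^{n-1}$ by the Cartan formula, with your explicit bidegree analysis merely spelling out the dimensional vanishing the paper asserts. (One cosmetic slip: proposition \ref{p:Wincl} provides a projection $\Gl(n)=W(n,n)\to W(n,m)$ of varieties inducing the inclusion on cohomology, not an inclusion of varieties $W(n,m)\hookrightarrow \Gl(n)$.)
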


\begin{theorem} \alabel{c:POnW}
  Let $p$ be an odd prime and suppose the ground-field has characteristic different from
  $p$. Represent $H^{*,*}(W(n,m); \Z/p)$ as $\M_p[\rho_n, \dots, \rho_{n-k+1}]/I$. The reduced power
  operations act as
  \begin{equation*}
    P^i(\rho_j) = \begin{cases} \binom{j-1}{i} \rho_{ip+j-i} \quad \text{ if $ip+j-i \le n$} \\ 0
      \quad \text{otherwise} \end{cases}
  \end{equation*}
  The Bockstein vanishes on these classes for dimensional reasons.
\end{theorem}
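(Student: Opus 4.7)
The plan is to mirror the even-prime argument of Theorem \ref{c:SqOnW}, transferring the computation from $\GL(n)$ to the Tate suspension $\Sigma_t^1 \Pc^{n-1}_+$ via Theorem \ref{t:MainComp} and exploiting the standard action of the reduced power operations on $\sigma$ and on powers of the hyperplane class $\eta$. By naturality of $P^i$ and the split inclusion $H^{*,*}(W(n,m)) \hookrightarrow H^{*,*}(\GL(n))$ supplied by Proposition \ref{p:Wincl}, it suffices to treat the case $W(n,n) = \GL(n)$.

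A quick bigrading count shows that for $k \geq 2$ the group $H^{2k-1, k}(\GL(n); \Z/p)$ equals $(\Z/p)\rho_k$ when $1 \leq k \leq n$ and vanishes when $k > n$; indeed the only way to write a nonzero monomial $m \cdot \rho_{i_1}\cdots\rho_{i_s}$ in bidegree $(2k-1, k)$ with $m \in \M_p$ is $s = 1$, $i_1 = k$, $m \in \Z/p$. Since $P^i(\rho_j)$ with $i \geq 1$ lies in bidegree $(2k-1, k)$ for $k = j + i(p-1) \geq p \geq 3$, Theorem \ref{t:MainComp} lets us verify the claimed identity after applying $f_n^*$. By the identification $f_n^*(\rho_j) = \sigma \eta^{j-1}$ and naturality,
\[
f_n^*(P^i(\rho_j)) = P^i(\sigma \eta^{j-1}),
\]
which I would expand by the Cartan formula.

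The class $\sigma \in H^{1,1}$ satisfies $P^a(\sigma) = 0$ for $a \geq 2$ by motivic instability, and $P^1(\sigma) = \sigma^p = \{-1\}^{p-1}\sigma$, using $\sigma^2 = \{-1\}\sigma$. Since $\{-1\} \in H^{1,1}(\pt; \Z)$ is $2$-torsion (from $2\{-1\} = \{(-1)^2\} = 0$), its image in $\Z/p$-cohomology is zero for $p$ odd, so $P^a(\sigma) = 0$ for every $a \geq 1$ and Cartan collapses to $P^i(\sigma \eta^{j-1}) = \sigma \cdot P^i(\eta^{j-1})$. Similarly $\eta \in H^{2,1}$ obeys $P^1(\eta) = \eta^p$ and $P^a(\eta) = 0$ for $a \geq 2$; iterating Cartan on $\eta^{j-1}$ and counting the $\binom{j-1}{i}$ ways of distributing the $P^1$'s among the $j-1$ factors yields $P^i(\eta^{j-1}) = \binom{j-1}{i}\eta^{j-1+i(p-1)}$.

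Combining, $f_n^*(P^i(\rho_j)) = \binom{j-1}{i}\sigma \eta^{ip+j-i-1}$, which matches $f_n^*\bigl(\binom{j-1}{i}\rho_{ip+j-i}\bigr)$ when $ip+j-i \leq n$; otherwise $\eta^{ip+j-i-1}$ vanishes in $H^{*,*}(\Pc^{n-1})$ and $P^i(\rho_j) = 0$ by the bidegree count above. The Bockstein claim follows from the same count applied to bidegree $(2j, j)$, in which no monomial in the $\rho_\ell$'s can sit for $j \geq 1$. The one genuinely load-bearing subtlety I expect is the vanishing $P^1(\sigma) = 0$, which is precisely where the hypothesis that $p$ is odd intervenes; everything else is routine bookkeeping through the comparison isomorphism.
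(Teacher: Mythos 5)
Your proposal follows essentially the same route as the paper, which proves the $p=2$ case and declares the odd-prime case ``much the same'': reduce to $\Gl(n)$ by the split inclusion of Proposition \ref{p:Wincl}, transfer the computation along $f_n^*$ using Theorem \ref{t:MainComp}, compute the operation on $\sigma\eta^{j-1}$, and conclude because $f_n^*$ is an isomorphism in bidegrees $(2k-1,k)$; your explicit bidegree counts for the range $ip+j-i>n$ and for the Bockstein simply spell out the paper's ``dimensional reasons.'' The only genuine deviation is how you strip off the $\sigma$-factor: the paper invokes stability of the reduced power operations under the Tate suspension, whereas you use the Cartan formula together with $P^a(\sigma)=0$ for $a\ge 1$. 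That vanishing is true, but your justification of it is not: the identity $P^1(\sigma)=\sigma^p$ is impossible on bidegree grounds, since $P^1(\sigma)$ lies in bidegree $(2p-1,p)$ while $\sigma^p$ lies in $(p,p)$; the instability formula $P^i(x)=x^p$ applies to classes of bidegree $(2i,i)$, such as $\eta$, not to $\sigma\in H^{1,1}$. The correct (and easier) argument is either the paper's Tate-suspension stability ($\sigma$ is the Tate suspension of the unit, and $P^a(1)=0$), or the observation that $\tilde H^{*,*}(\G_m;\Z/p)$ is a free $\M_p$-module on $\sigma$ in bidegree $(1,1)$, so that $P^a(\sigma)$ lies in $\M_p^{2a(p-1),\,a(p-1)}\sigma$, which vanishes because $\M_p^{c,d}=0$ for $c>d$. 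In particular the vanishing of $P^a(\sigma)$ has nothing to do with $p$ being odd or with $\{-1\}$ dying in $\Z/p$-coefficients, so it is not the load-bearing subtlety you took it to be; with that one correction your argument is sound and coincides with the paper's.
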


Observe that in both cases, since the cohomology ring is multiplicatively generated by the $\rho_j$,
the given calculations suffice to deduce the reduced-power operations in full on the appropriate
cohomology ring.

\begin{proof}
  We prove only the case of $p=2$, the other cases being much the same.

  We observe that $\Sq^{2j}$ is honest squaring on
  $H^{2j,j}(\P^n;\Z/2)$, on the classes $\eta^i \in H^{2i,i}(\P^n;\Z/2)$ the Bockstein vanishes, and
  as a consequence the expected Cartan formula obtains for calculating $\eta^{i+i'}$, it is a simple
  matter of induction to show that $\Sq^{2i}(\theta^j) = \binom{j}{i} \theta^{j+i}$.

  There is an inclusion of $H^{*,*}(W(n,m);\Z/2) \subset H^{*,*}(\Gl(n);\Z/2)$ arising from the projection map,
  see proposition \ref{p:Wincl}. It suffices therefore to compute the action of the squares on
  $H^{*,*}(\Gl(n);\Z/2)$. Using the previous proposition and the decomposition in equation
  \eqref{e:decompOfXn}, we have isomorphisms 
  \begin{equation*}
    H^{2n-1,n}(\Gl(n);\Z/2) \isom H^{2n-1,n}(\Sigma_t^1\P^{n-1};\Z/2)
  \end{equation*}
  The reduced power operations are stable not only with respect to the simplicial suspension, but
  are also stable with respect to the Tate suspension. This allows a transfer of the calculation on
  $\P^n$ to the calculation on $\GL_n$ via the comparison of theorem \ref{t:MainComp}

  To be precise, we have
  \begin{equation*} \begin{split}
    f_n^*(\Sq^{2i}\rho_j) = \Sq^{2i}f_n^*(\rho_j) = \Sq^{2i}\sigma\eta^{j-1}  \\
    =\sigma\Sq^{2i}\eta^{j-1} = \begin{cases} \binom{j-1}{i}\sigma\eta^{j+i-1}=
      \binom{j-1}{i}f_n^*(\rho_{j+i}) \quad 
      \text{if $i+j \leq n$} \\ 0 \quad \text{otherwise} \end{cases} \end{split}
  \end{equation*}
  Since $f_n^*$ is an isomorphism on these groups, the result follows.  
\end{proof}

\bibliography{BThesis}

\begin{thebibliography}{MVW06}

\bibitem[Ada62]{ADAMSVBL}
J.~F. Adams.
\newblock Vector fields on spheres.
\newblock {\em Ann. of Math. (2)}, 75:603--632, 1962.

\bibitem[Blo86]{BLOCH86}
Spencer Bloch.
\newblock Algebraic cycles and higher {$K$}-theory.
\newblock {\em Advances in Mathematics}, 61:267--304, 1986.

\bibitem[Blo94]{BLOCHMOVE}
Spencer Bloch.
\newblock The moving lemma for higher {C}how groups.
\newblock {\em Journal of Algebraic Geometry}, 3:537--568, 1994.

\bibitem[Del]{DELIGNE}
Pierre Deligne.
\newblock Lectures on motivic cohomology 2000/2001.
\newblock www.math.uiuc.edu/K-theory/0527.

\bibitem[DHI04]{DHI}
Daniel Dugger, Sharon Hollander, and Daniel~C. Isaksen.
\newblock Hypercovers and simplicial presheaves.
\newblock {\em Math. Proc. Cambridge Philos. Soc.}, 136(1):9--51, 2004.

\bibitem[Ful84]{FULTON}
William Fulton.
\newblock {\em Intersection Theory}.
\newblock Springer Verlag, 1984.

\bibitem[Gro61]{EGAII}
A.~Grothendieck.
\newblock \'{E}l\'ements de g\'eom\'etrie alg\'ebrique. {II}. \'{E}tude globale
  \'el\'ementaire de quelques classes de morphismes.
\newblock {\em Inst. Hautes \'Etudes Sci. Publ. Math.}, (8):222, 1961.

\bibitem[Har77]{HART}
Robin Hartshorne.
\newblock {\em Algebraic Geometry}, volume~52 of {\em Gradute Texts in
  Mathematics}.
\newblock Springer Verlag, 1977.

\bibitem[Jam76]{JAMES}
I.~M. James.
\newblock {\em The Topology of Stiefel Manifolds}.
\newblock Cambridge University Press, Cambridge, England, 1976.

\bibitem[MV99]{MV}
Fabien Morel and Vladamir Voevodsky.
\newblock $\mathbb{A}^1$-homotopy theory of schemes.
\newblock {\em Inst. Hautes \'Etudes Sci. Publ. Math.}, (90):45--143 (2001),
  1999.

\bibitem[MVW06]{MVW}
Carlo Mazza, Vladimir Voevodsky, and Charles Weibel.
\newblock {\em Lectures on Motivic Cohomology}, volume~2 of {\em Clay
  Monographs in Math}.
\newblock AMS, 2006.

\bibitem[Pus04]{PUSHIN}
Oleg Pushin.
\newblock Higher {C}hern classes and {S}teenrod operations in motivic
  cohomology.
\newblock {\em $K$-Theory}, 31(4):307--321, 2004.

\bibitem[Ray68]{MRaynaud}
Mich{\`e}le Raynaud.
\newblock Modules projectifs universels.
\newblock {\em Invent. Math.}, 6:1--26, 1968.

\bibitem[Voe02]{ALLAGREE}
Vladimir Voevodsky.
\newblock Motivic cohomology are isomorphic to higher {C}how groups in any
  characteristic.
\newblock {\em International Mathematics Research Notices}, 7:351--355, 2002.

\bibitem[Voe03]{VREDPOWER}
Vladimir Voevodsky.
\newblock Reduced power operations in motivic cohomology.
\newblock {\em Publications Math\'ematiques de l'IH\'ES}, 98:1--57, 2003.

\bibitem[Wei99]{WPRODAGREE}
Charles~A. Weibel.
\newblock Products in higher {C}how groups and motivic cohomology.
\newblock In {\em Algebraic $K$-theory, Seattle WA 1997}, Proceedings of
  Symposia in Pure Mathematics, pages 305--315. American Mathematical Society,
  1999.

\end{thebibliography}
\bibliographystyle{alpha}

\end{document}